\theoremstyle{plain}
\newtheorem{lemma}{Lemma}[section]
\newtheorem{prp}[lemma]{Proposition}
\newtheorem{thm}[lemma]{Theorem}
\newtheorem*{thm*}{Theorem}
\newtheorem{crl}[lemma]{Corollary}
\newtheorem*{crl*}{Corollary}
\newtheorem{question}[lemma]{Question}
\newtheorem{claim}[lemma]{Claim}
\newtheorem{dfn}[lemma]{Definition}
\newcommand{\om}{\omega}
\newcommand{\ZFC}{\mathsf{ZFC}}
\newcommand{\CH}{\mathsf{CH}}
\newcommand{\forces}{\Vdash}
\title{Filters and ideal independence}
\author{J. Cancino-Manr\'iquez}
\address{Institute of Mathematics, Czech Academy of Sciences, \v{Z}itn\'a 25, Praha 1,
Praha, Czech Republic}
\email{cancino@math.cas.cz}
\author{V. Fischer}
\address{Institute of Mathematics, Univeristy of Vienna, Kolingasse 14-16, 1090 Vienna, Austria}
\email{vera.fischer@univie.ac.at}
\author{C. Bacal Switzer}
\address{Institute of Mathematics, Univeristy of Vienna, Kolingasse 14-16, 1090 Vienna, Austria}
\email{corey.bacal.switzer@univie.ac.at}
\thanks{\emph{Acknowledgments.}: The authors would like to thank the Austrian Science Fund (FWF) for the generous support through START Grant Y1012-N35. 
%Part of the work on the current paper was done during a series of visits of the first author to the Kurt G\"odel Research Center.
}
\subjclass[2000]{03E35, 03E17}
\keywords{Ideal independent family, ideal, filter, ultrafilter, cardinal characteristic}
\begin{document}

\begin{abstract}
A family $\mathscr{I} \subseteq [\omega]^\omega$ such that for all finite $\{X_i\}_{i\in n}\subseteq \mathcal I$ and $A \in \mathscr{I} \setminus \{X_i\}_{i\in n}$, the set $A \setminus \bigcup_{i < n} X_i$ is infinite, is said to be ideal independent. An ideal independent family which is maximal under inclusion is said to be a maximal ideal independent family and the least cardinality of such family is denoted $\mathfrak{s}_{mm}$. 

We show that $\mathfrak{u}\leq\mathfrak{s}_{mm}$, which in particular establishes 
the independence of $\mathfrak{s}_{mm}$ and $\mathfrak{i}$. Given an arbitrary set  $C$ of uncountable cardinals, we show how to simultaneously adjoin via forcing maximal ideal independent families of  cardinality $\lambda$ for each $\lambda\in C$, thus establishing the consistency of $C\subseteq \hbox{spec}(\mathfrak{s}_{mm})$. Assuming $\CH$, we construct  a maximal ideal independent family, which remains maximal after forcing with any proper, $^\omega\omega$-bounding, $p$-point preserving forcing notion and evaluate $\mathfrak{s}_{mm}$ in several well studied forcing extensions. 
\end{abstract}

\maketitle

\section{Introduction}

Given a family $\mathscr I \subseteq [\omega]^\omega$, the {\em ideal associated to} $\mathscr I$ is the collection of all $A \subseteq \omega$ so that $A \subseteq^* \bigcup_{i < n} X_i$ for some finite $\{X_i\}_{i\in n}\subseteq \mathscr{I}$ where $\subseteq^*$ means inclusion mod finite. A family $\mathscr{I}$ is {\em ideal independent} if no $A \in \mathscr{I}$ is in the ideal generated by $\mathscr{I} \setminus \{A\}$. More precisely,  $\mathscr{I}$ is ideal independent, if whenever $X_0, ..., X_{n-1}, A \in \mathscr{I}$ and $A \subseteq^* \bigcup_{i < n} X_i$ then there is an $i < n$ so that $X_i = A$. Almost disjoint families and independent families are both examples of ideal independent families. An easy application of Zorn's lemma shows that there are maximal ideal independent families, however it is not always the case that a maximal almost disjoint or a maximal independent family is maximal ideal independent. Thus, the cardinal characteristic $\mathfrak{s}_{mm}$, defined as the least cardinality of a maximal ideal independent family, becomes of interest.

An earlier investigation of $\mathfrak{s}_{mm}$ can be found \cite{cancino_guzman_miller_2021}, where it is shown that ${\rm max}\{\mathfrak{d}, \mathfrak{r}\}\leq\mathfrak{s}_{mm}$ and that each of the following inequalities $\mathfrak{u} < \mathfrak{s}_{mm}$, $\mathfrak{s}_{mm} < \mathfrak{i}$, $\mathfrak{s}_{mm} < \mathfrak{c}$ is consistent. Here $\mathfrak{d}$, $\mathfrak{u}$, $\mathfrak{r}$,  $\mathfrak{i}$ denote the is the dominating number, the ultrafilter number, the reaping and  independence numbers, respectively.  We refer the reader to \cite{blass_handbook} for  definitions and basic properties of the combinatorial cardinal characteristics, which are not stated here. Strengthening and complimen\-ting the above results, in Section 2, we establish the following $\ZFC$ inequality, which also answers  Question 17 of \cite{cancino_guzman_miller_2021}, see Theorem \ref{mainthm1}:

\begin{thm*}
$\mathfrak{u}\leq \mathfrak{s}_{mm}$. %in $\ZFC$.
%\label{mainthm1}
\end{thm*}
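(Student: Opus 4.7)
The plan is to attach to any maximal ideal independent family $\mathscr{I}$ a non-principal ultrafilter on $\omega$ with a base of cardinality $|\mathscr{I}|$; specialising to a witness of $\mathfrak{s}_{mm}$ then gives $\mathfrak{u}\leq \mathfrak{s}_{mm}$. The candidate base, for each $X\in \mathscr{I}$, is the fibre
\[
\mathcal{B}_X=\{X\setminus \textstyle\bigcup_{i<n} Y_i \,:\, n<\omega,\; Y_0,\ldots,Y_{n-1}\in \mathscr{I}\setminus\{X\}\},
\]
which by ideal independence consists of infinite sets and is closed under finite intersections, so it generates a free filter $\mathcal{F}_X$ of cardinality $|\mathscr{I}|$. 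Thus it suffices to exhibit at least one $X\in \mathscr{I}$ for which $\mathcal{F}_X$ is an ultrafilter, equivalently for which the trace on $X$ of the ideal $\mathcal{I}_X$ generated by $\mathscr{I}\setminus\{X\}$ is a prime ideal on $X$.

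The first step is to rephrase the maximality of $\mathscr{I}$ as a $\pi$-base principle: for every positive $A\in[\omega]^\omega\setminus \mathcal{I}(\mathscr{I})$ there exist $Y\in \mathscr{I}$ and distinct $W_0,\ldots,W_{k-1}\in \mathscr{I}\setminus\{Y\}$ with $Y\setminus \bigcup_j W_j\subseteq^* A$, since case (b) of ``$\mathscr{I}\cup\{A\}$ is not ideal independent'' is forced whenever case (a) fails, and $\omega\notin \mathcal{I}(\mathscr{I})$ holds for any infinite ideal independent $\mathscr{I}$. This already reproves $\mathfrak{r}\leq|\mathscr{I}|$; the theorem is the sharper statement that this $\pi$-base can be localised to a single fibre $\mathcal{B}_X$. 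Assume toward contradiction the negation: for every $X\in \mathscr{I}$ pick $A_X\subseteq X$ with $A_X$ and $X\setminus A_X$ both positive in $\mathcal{I}_X$, and choose $X_0\in \mathscr{I}$ with $\omega\setminus X_0\notin \mathcal{I}(\mathscr{I})$ (the degenerate situation in which no such $X_0$ exists is dispatched by a short separate argument using the dual filter of $\mathcal{I}(\mathscr{I})$ directly).

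Applying the $\pi$-base principle to the positive sets $Z=A_{X_0}\cup(\omega\setminus X_0)$ and $Z'=(X_0\setminus A_{X_0})\cup(\omega\setminus X_0)$ delivers witnesses $(Y,\vec W)$ and $(Y',\vec W')$ with $Y,Y'\neq X_0$, since $Y=X_0$ would force $X_0\setminus A_{X_0}\in \mathcal{I}_{X_0}$ and $Y'=X_0$ would force $A_{X_0}\in \mathcal{I}_{X_0}$, both contradicting the choice of $A_{X_0}$. The key combinatorial step---and the main obstacle---is then to combine these two non-$X_0$ witnesses with the ideal independence of $\mathscr{I}$ applied to $\{X_0,Y,Y'\}\cup\vec W\cup\vec W'$ to produce a positive set escaping the $\pi$-base property, contradicting maximality. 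The naive strategy of adjoining $A_{X_0}$ itself is blocked because $A_{X_0}\subseteq X_0$ already lies in $\mathcal{I}(\mathscr{I})$ via $X_0$, which is precisely why one must pass to the enlargements $A_{X_0}\cup(\omega\setminus X_0)$ and track the interaction between $Y$, $Y'$, and $X_0$ carefully. Once the contradiction is produced, the target $X$ exists and $\mathcal{B}_X$ witnesses $\mathfrak{u}\leq|\mathscr{I}|$.
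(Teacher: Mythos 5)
Your reduction (an ultrafilter fibre $\mathcal{F}_X$ has a base of size $|\mathscr{I}|$, so one ultra fibre gives $\mathfrak{u}\leq|\mathscr{I}|$) is fine, but the proof as written has a genuine gap, and the route you commit to cannot be completed. First, the gap is explicit: you yourself label the step that is supposed to produce the contradiction (combining the witnesses $(Y,\vec W)$ and $(Y',\vec W')$ with ideal independence of $\{X_0,Y,Y'\}\cup\vec W\cup\vec W'$) as ``the main obstacle'' and never carry it out. Second, and more seriously, the statement you are trying to prove by contradiction --- that \emph{every} maximal ideal independent family has some $X$ with $\mathcal{F}_X$ an ultrafilter --- uses no cardinality hypothesis on $\mathscr{I}$ and is consistently false. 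Under $\CH$ (or $\mathfrak{s}\leq\mathfrak{a}$, or $\mathfrak{c}<\aleph_\omega$) there is a completely separable maximal almost disjoint family $\mathcal{A}$; it is a maximal ideal independent family, and for each $A\in\mathcal{A}$ the fibre $\mathcal{F}(\mathcal{A},A)$ is just the filter of cofinite subsets of $A$ (all other members are almost disjoint from $A$), hence never an ultrafilter. Running your scheme on $\mathcal{A}$: $\omega\setminus X_0$ is positive for every $X_0$, any infinite co-infinite $A_{X_0}\subseteq X_0$ splits the fibre, and complete separability supplies $Y\subseteq Z$ and $Y'\subseteq Z'$ with $Y,Y'\neq X_0$ and $\vec W=\vec W'=\emptyset$ --- and no contradiction is available, because this configuration actually occurs. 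So no amount of care in ``tracking the interaction between $Y$, $Y'$ and $X_0$'' can close the argument: the contradiction you seek does not exist without a cardinality assumption on $\mathscr{I}$.

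For comparison, the paper's proof makes essential use of $|\mathscr{I}|<\mathfrak{u}$ (hence $|\mathscr{I}|<\mathfrak{c}$), twice. It fixes a countable $\{A_n\}_{n\in\omega}\subseteq\mathscr{I}$ whose union is almost all of $\omega$, disjointifies it into pieces $B_n$, and notes that each restricted complemented filter $\mathcal{F}(\mathscr{I},A_n)\upharpoonright B_n$ has character $<\mathfrak{u}$ and so is \emph{not} an ultrafilter; this allows a binary tree of pairwise disjoint positive sets $D_s$ ($s\in 2^{n}$) inside the $B_n$'s. Each branch union $D^f$ ($f\in 2^\omega$) is positive, so maximality assigns to it a pair $(A_f,F_f)$ with $A_f\setminus\bigcup F_f\subseteq^* D^f$ and $A_f\neq A_n$ for all $n$; since there are only $|\mathscr{I}|^{<\omega}<\mathfrak{c}$ such pairs, two branches share a pair, forcing $A_f\setminus\bigcup F_f$ into a finite union $\bigcup_{i\leq n_0}A_i$ and contradicting ideal independence. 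The correct localized fact (which the paper notes) is that for $|\mathscr{I}|<\mathfrak{c}$ all but finitely many fibres are ultrafilters --- but this is a consequence of that counting argument, not of maximality alone; whether a maximal ideal independent family of size $\mathfrak{c}$ can have no ultra fibre is precisely one of the open questions raised there. If you want to salvage your plan, you must inject the hypothesis $|\mathscr{I}|<\mathfrak{u}$ (and $<\mathfrak{c}$) at the point where you claim a fibre must split, which essentially leads back to the paper's argument.
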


As a consequence we obtain the independence of $\mathfrak{s}_{mm}$ and $\mathfrak{i}$, as the consistency of $\mathfrak{s}_{mm} < \mathfrak{i}$ is shown in \cite[Theorem 16]{cancino_guzman_miller_2021}, while the consistency of $\mathfrak{i} < \mathfrak{u}$ is established in Shelah's \cite{con_i_u} and hence by the above theorem, $\mathfrak{i} < \mathfrak{s}_{mm}$ holds in the latter model.

\begin{crl*}
$\mathfrak{s}_{mm}$ and $\mathfrak{i}$ are independent.
\end{crl*}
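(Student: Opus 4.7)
The plan is to verify independence by exhibiting two models, one for each direction of the strict inequality between $\mathfrak{s}_{mm}$ and $\mathfrak{i}$. Independence of two cardinal characteristics means it is consistent with $\ZFC$ that they differ in either direction, so I need to produce a model where $\mathfrak{s}_{mm} < \mathfrak{i}$ and a model where $\mathfrak{i} < \mathfrak{s}_{mm}$.

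For the direction $\mathfrak{s}_{mm} < \mathfrak{i}$, I would quote \cite[Theorem 16]{cancino_guzman_miller_2021} directly, since the consistency of this inequality is already established there. No additional argument is needed beyond invoking that result.

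For the direction $\mathfrak{i} < \mathfrak{s}_{mm}$, the strategy is to combine the main $\ZFC$ theorem of Section 2 with Shelah's consistency result \cite{con_i_u}. Specifically, in Shelah's model one has $\mathfrak{i} < \mathfrak{u}$; applying the inequality $\mathfrak{u} \leq \mathfrak{s}_{mm}$ in that model yields the chain $\mathfrak{i} < \mathfrak{u} \leq \mathfrak{s}_{mm}$, so in particular $\mathfrak{i} < \mathfrak{s}_{mm}$ holds there.

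Since both strict inequalities are realized in some model of $\ZFC$, the two cardinals are independent. There is no real obstacle here: the corollary is an immediate consequence of the newly established $\ZFC$ inequality together with previously known consistency results, and the proof essentially amounts to citing the three earlier statements and assembling the chain of inequalities in Shelah's model.
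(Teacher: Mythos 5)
Your argument is correct and matches the paper's own reasoning exactly: the consistency of $\mathfrak{s}_{mm} < \mathfrak{i}$ is quoted from \cite[Theorem 16]{cancino_guzman_miller_2021}, and the consistency of $\mathfrak{i} < \mathfrak{s}_{mm}$ follows by combining Shelah's model of $\mathfrak{i} < \mathfrak{u}$ from \cite{con_i_u} with the inequality $\mathfrak{u} \leq \mathfrak{s}_{mm}$ established in Theorem \ref{mainthm1}. Nothing further is needed.
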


A key role in our investigations is taken by specific filters, which are naturally associated to a given ideal independent family. On one side, these are filters to which we refer as complemented filters, see Definition \ref{complemented_filter_def} and on the other side, filters resembling the notion of a diagonalization filter for an independent family, see for example \cite[Definition 1]{VFSS1}. In difference with earlier instances of diagonalization reals, associated to say almost disjoint families, towers, or cofinitary groups, the existence of a diagonalization
real for a given ideal independent family, employs a Cohen real (see Lemma \ref{extension_lemma}). Adjoining diagonalization reals for ideal independent families cofinally along an appropriate finite support iteration, as well as building on and modifying earlier forcing constructions used to  control for example the spectrum of independence (see in particular \cite{VFSS1, VFSS2}) we establish the following (see Theorem \ref{mainthm2}):

%%% Diagonalization filters and their use! Precursors of the forcing strategy.

\begin{thm*} %\label{mainthm2}
(GCH) Let $R$ be a set of regular uncountable cardinals. Then, there is a ccc generic extension in which for every $\lambda\in R$ there is a maximal ideal independent family of cardinality $\lambda$. 
\end{thm*}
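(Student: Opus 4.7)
The plan is to adapt the matrix-style finite support ccc iteration used in \cite{VFSS1, VFSS2} for the independence spectrum, replacing the diagonalization reals for independent families with diagonalization reals for ideal independent families produced by Lemma \ref{extension_lemma}. Starting in a model of GCH, set $\kappa := \sup R$ (taking $\kappa^+$ if $\sup R \notin R$) and perform a ccc finite support iteration $\langle \mathbb{P}_\alpha, \dot{\mathbb{Q}}_\alpha : \alpha < \kappa \rangle$ of length $\kappa$. In parallel, maintain one ideal independent family $\mathscr{I}_\lambda$ for each $\lambda \in R$, together with its associated complemented/diagonalization filter $\mathcal{F}_\lambda$ (cf.\ Definition \ref{complemented_filter_def}). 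Under CH one first constructs a common starting seed: for each $\lambda$ an initial ideal independent family of size $\aleph_1$ together with a diagonalization filter, which serves as the ``ground'' structure to extend at each stage.

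Partition $\kappa$ into disjoint cofinal ``tracks'' $\{T_\lambda : \lambda \in R\}$ with $|T_\lambda|=\lambda$, and fix for each $\lambda$ a bookkeeping function $f_\lambda \colon T_\lambda \to H(\kappa)$ that, by a standard $\Delta$-system / reflection argument, enumerates all nice $\mathbb{P}_\kappa$-names for infinite subsets of $\omega$ that might witness failure of maximality of $\mathscr{I}_\lambda$. At stage $\alpha \in T_\lambda$, inspect $f_\lambda(\alpha) = \dot A$; if $\dot A$ is forced to be independent over the ideal generated by the current $\mathscr{I}_\lambda$, invoke Lemma \ref{extension_lemma} to let $\dot{\mathbb{Q}}_\alpha$ be the Cohen-based forcing that adjoins a real $X^\lambda_\alpha$ which ideal-independently extends $\mathscr{I}_\lambda$ and at the same time pushes $\dot A$ into the ideal generated by $\mathscr{I}_\lambda \cup \{X^\lambda_\alpha\}$ (with respect to $\mathcal{F}_\lambda$). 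Otherwise, pick $X^\lambda_\alpha$ from the current filter/complemented structure to keep $|\mathscr{I}_\lambda|$ on the right cardinality track. Update $\mathscr{I}_\lambda$ and $\mathcal{F}_\lambda$ accordingly; leave the other families alone at this stage.

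In the final model, each $\mathscr{I}_\lambda$ has cardinality exactly $|T_\lambda|=\lambda$; ideal independence is preserved throughout the iteration because each new element was chosen, via the extension lemma, to be ideal independent over the current filter $\mathcal{F}_\lambda$ (which contains the complements of all finite unions of earlier elements). Maximality of $\mathscr{I}_\lambda$ follows from the bookkeeping: any $\mathbb{P}_\kappa$-name $\dot A$ witnessing non-maximality would be considered at some $\alpha \in T_\lambda$, where either $\dot A$ was already in the ideal or it was forced into it by $X^\lambda_\alpha$. Standard chain-condition arguments (all iterands are ccc subforcings built from Cohen and $\sigma$-centered pieces) yield that $\mathbb{P}_\kappa$ preserves cardinals and cofinalities, so the $\lambda$'s remain what they were.

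The main obstacle, and the place where the argument diverges from the independent-family case, is showing that a diagonalization step targeting $\mathscr{I}_\lambda$ does not destroy the ideal independence of $\mathscr{I}_{\lambda'}$ for $\lambda' \neq \lambda$. Because Lemma \ref{extension_lemma} uses a Cohen real to produce $X^\lambda_\alpha$, this reduces to proving that Cohen forcing (more precisely the forcing of Lemma \ref{extension_lemma}) preserves ideal independence of any ideal independent family living in the intermediate model whose associated diagonalization filter is also available there, which in turn requires a density/genericity analysis showing that no finite Boolean combination of elements from $\mathscr{I}_{\lambda'}$ can $\subseteq^*$-cover any other element once a generic real is added. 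This compatibility between the diagonalization operations for different target cardinalities is the technical heart of the proof, and is precisely where the modifications of the VFSS templates must be carried out for the ideal independent setting.
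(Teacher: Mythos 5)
There is a genuine gap, and it sits at the center of your architecture. You require the tracks $T_\lambda$ to be simultaneously cofinal in $\kappa$ and of size $\lambda$; for $\lambda<\operatorname{cf}(\kappa)=\kappa$ (e.g.\ $R=\{\aleph_1,\aleph_2\}$, $\kappa=\aleph_2$, $T_{\aleph_1}$) this is impossible, since any cofinal subset of a regular $\kappa$ has size $\kappa$. Nor can the cofinality requirement be dropped: the later iterands keep adding reals, and any real appearing after $\sup T_\lambda$ is never treated for $\mathscr{I}_\lambda$, so maximality fails; but keeping $T_\lambda$ cofinal inflates $\mathscr{I}_\lambda$ to size $\kappa$, destroying exactly the cardinality control the theorem is about. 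Relatedly, you use Lemma \ref{extension_lemma} as if it were a one-name diagonalization (``pushes $\dot A$ into the ideal''), but its actual content is stronger and is what makes the theorem provable: a single application adds one set $z$ such that \emph{no} real of the current intermediate model can be added to $\mathscr{I}\cup\{z\}$ (and not necessarily by landing in the ideal; possibly the old real ends up almost contained in $z=\omega\setminus x$, or an old member of the family becomes covered). The paper exploits this via a two-phase construction your proposal lacks: first add $\kappa$ Cohen reals so that $\mathfrak{c}=\kappa$, and only then choose the families $\mathscr{I}_\lambda$ of size $\lambda$; afterwards run a finite support iteration of outer length $\omega_1$, each iterand applying $\mathbb{P}(\cdot)$ once to the current version of every family. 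Since the outer length has uncountable cofinality and everything is ccc with finite supports, every real of the final model appears at some outer stage $\alpha<\omega_1$ and is frozen out for every family at the next application; each family gains only $\omega_1\leq\lambda$ new members, so its size stays $\lambda$. No name-by-name bookkeeping is needed, and none could work within these cardinality constraints.

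In addition, the step you call the technical heart --- that a diagonalization step for $\mathscr{I}_\lambda$ might destroy ideal independence of $\mathscr{I}_{\lambda'}$ --- is a non-issue. Ideal independence of a fixed ground-model family is a conjunction of statements of the form ``$A\setminus\bigcup F$ is infinite'' about ground-model sets, hence absolute to all forcing extensions; forcing can only threaten maximality, never ideal independence. The genuine technical content is the freeze-out property of Lemma \ref{extension_lemma} combined with the short ($\omega_1$-length) outer iteration over a model in which the continuum has already been blown up to $\kappa$, and this is what your single $\kappa$-length bookkeeping iteration does not reproduce.
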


Moreover, we look at the preservation of small witnesses to $\mathfrak{s}_{mm}$. The preservation of the maximality of extremal sets  of reals, like mad families, maximal eventually different families of reals, or maximal independent families, under forcing iterations is usually a non-trivial task and often involves the construction of a combinatorial object which is maximal in a strong sense, examples given by tight almost disjoint and selective independent families. Partially inspired by the notion of an $\mathcal{U}$-supported maximal independent family in the higher Baire spaces given in \cite{VFDM}, in Definition \ref{encompassing} we introduce the notion of an $\mathcal{U}$-encompassing ideal independent family and establish the following general preservation result (see Theorems \ref{mainthm3} and \ref{preserving_encompassing}). We should point out, that even though, the latter two notions have some superficial similarities, they do remain significantly different, as they reflect the structure of rather distinct combinatorial sets of reals. 

\begin{thm*}%\label{mainthm3}
(CH) There is a maximal ideal independent family $\mathscr{I}$ which remains maximal, and so a witness to  $\mathfrak{s}_{mm} = \aleph_1$, in any generic extension obtained by a proper, $\om^\om$-bounding, $p$-points preserving forcing notion.
\end{thm*}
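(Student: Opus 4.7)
The plan is to construct the family by transfinite recursion of length $\omega_1$ using $\CH$, arranging along the way that the resulting family satisfies the $\mathcal{U}$-encompassing property from Definition \ref{encompassing} for a suitably chosen p-point ultrafilter $\mathcal{U}$, and then to invoke Theorem \ref{preserving_encompassing} to deduce preservation under the specified class of forcings. The strategy thus splits cleanly into two parts: a $\CH$-construction of a $\mathcal{U}$-encompassing maximal ideal independent family, followed by an appeal to the general preservation machinery proved earlier in the section.

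First, using $\CH$, I would fix a p-point ultrafilter $\mathcal{U}$ on $\omega$ and an enumeration $[\omega]^\omega = \{A_\alpha : \alpha < \omega_1\}$, augmented by standard bookkeeping that also cycles through finite subsets of each initial segment of the family being built. I would then construct an increasing chain $\langle \mathscr{I}_\alpha : \alpha < \omega_1\rangle$ of countable ideal independent families, starting from an arbitrary countable ideal independent $\mathscr{I}_0$, and taking unions at limit stages. The task at successor stage $\alpha+1$ is, given the threat $A_\alpha$, to either recognize that $A_\alpha$ already lies in the ideal generated by $\mathscr{I}_\alpha$ (in which case nothing further is needed), or to adjoin an $X_\alpha$ so that $A_\alpha$ is absorbed into the ideal of the enlarged family while simultaneously both preserving ideal independence and activating the appropriate clause of the encompassing definition relative to $\mathcal{U}$.

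The engine for the successor step is Lemma \ref{extension_lemma}, which produces a diagonalization real for any countable ideal independent family, at the expense of a Cohen real. Since $\mathscr{I}_\alpha$ is countable, there are in the ground model reals that are Cohen-generic over a sufficiently rich countable structure coding $\mathscr{I}_\alpha$, and this suffices to exhibit, by absoluteness of the lemma's conclusion, a suitable $X_\alpha$ directly in $V$. The additional constraint beyond the standard construction of a maximal ideal independent family is that among the many diagonalizing candidates one must be selected whose trace on $\mathcal{U}$ is exactly what Definition \ref{encompassing} requires; the countability of the data at stage $\alpha$ together with the filter properties of $\mathcal{U}$ will allow such a refined choice. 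At the end, $\mathscr{I} = \bigcup_{\alpha<\omega_1}\mathscr{I}_\alpha$ is maximal (every $A_\alpha$ has been handled), ideal independent (maintained at every stage), and $\mathcal{U}$-encompassing (by design), so Theorem \ref{preserving_encompassing} delivers the desired preservation.

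The principal obstacle will be the successor step, where three goals must be met simultaneously: absorbing the current $A_\alpha$, preserving ideal independence against \emph{every} finite subset of the family already built, and conforming to the $\mathcal{U}$-encompassing profile. Cohen-like genericity over the countable initial segment handles the first two, but aligning the new generator with the ultrafilter $\mathcal{U}$ without sacrificing independence is the subtle bookkeeping that must be sustained throughout the recursion; this is exactly where the superficial analogy with $\mathcal{U}$-supported independent families breaks down, since the ideal independence condition forbids absorption by \emph{finite unions} rather than by Boolean combinations, and so the compatibility with $\mathcal{U}$ must be arranged relative to these unions at every stage.
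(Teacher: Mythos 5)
Your high-level decomposition is the same as the paper's: under $\CH$ build a $\mathscr{U}$-encompassing maximal ideal independent family for a $p$-point $\mathscr{U}$ and then quote Theorem \ref{preserving_encompassing}. However, the construction you sketch has two genuine gaps. First, Theorem \ref{preserving_encompassing} does not apply to an arbitrary $\mathscr{U}$-encompassing family: it additionally requires that \emph{every} complemented filter $\mathcal{F}(\mathscr{I},A)$ be a $p$-point, since the preservation argument needs each $\mathcal{F}(\mathscr{I},A_n)$ to be an ultrafilter preserved by $\mathbb{P}$ before $\omega^\omega$-bounding and properness can be applied to the functions $f,g$. Securing this is a substantial part of the recursion in Theorem \ref{mainthm3} (condition (5): bookkeeping through all partitions of each member and arranging that, modulo a finite union and a finite set, each member is a partial selector of, or contained in one piece of, each enumerated partition, which makes the complemented filters selective). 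Your proposal never addresses this, and it does not come for free from maximality or from encompassing.

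Second, your successor step is incorrect in the case $A_\alpha\in\mathscr{U}$: encompassing condition (1) forces $\mathscr{I}\subseteq\mathscr{U}^*$, so a set in $\mathscr{U}$ can never be ``absorbed into the ideal'' generated by the family; such threats are handled automatically because, by condition (2), they lie in co-countably many complemented filters, and the paper accordingly does nothing at those stages. Moreover, the actual mechanism for condition (2) is concrete and is missing from your sketch: under $\CH$ the $p$-point $\mathscr{U}$ is generated by a $\subseteq^*$-decreasing sequence $\langle Y_\alpha\rangle$, and new generators are added in \emph{pairs} $A_0^\alpha,A_1^\alpha\in\mathscr{U}^*$ with $A_0^\alpha\setminus A_1^\alpha,\ A_1^\alpha\setminus A_0^\alpha\subseteq Y_\alpha$, so that for any $X\in\mathscr{U}$, once $Y_{\alpha_0}\subseteq^* X$ we get $A_i^\alpha\setminus A_{1-i}^\alpha\subseteq^* X$, i.e. $X\in\mathcal{F}(\mathscr{I},A_i^\alpha)$, for all later $\alpha$. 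Your proposed engine, Lemma \ref{extension_lemma} invoked via genericity over countable structures, gives no control of the new set relative to $\mathscr{U}$: the Mathias-style real is not arranged to lie in $\mathscr{U}^*$, is not aligned with the $Y_\alpha$'s, and does not respect the partition bookkeeping, so neither encompassing nor the $p$-point property of the complemented filters is obtained. (There is also the smaller point that the lemma's conclusion quantifies over all ground-model reals, while genericity over a countable structure only handles the reals of that structure; this is tolerable for one threat per stage but cannot be waved through as ``absoluteness.'') So the appeal to Theorem \ref{preserving_encompassing} is the right final step, but the family your recursion would produce is not one to which that theorem applies.
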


In particular, the above result implies that in many well-studied forcing extensions, $\mathfrak{s}_{mm}=\max\{\mathfrak{d}, \mathfrak{u}\}$. We conclude the paper with a brief discussion of remaining open questions.

\subsection{Preliminaries}
Recall that given $f, g\in \om$ we write $f \leq^* g$ ($f$ is {\em eventually dominated by} $g$), provided there is $n\in\omega$ such that for all $m\geq n$, $f(m)\leq g(m)$. The cardinal $\mathfrak{b}$, the {\em bounding number} is the least size of a family $A \subseteq \om^\om$ so that no single $f \in \om^\om$ eventually dominates every $g \in A$. Dually, the {\em dominating number}, $\mathfrak{d}$, is the least size of a {\em dominating family}, that is a family $D \subseteq\om^\om$ so that every $f \in \om^\om$ is eventually dominated by some $g \in D$. We denote by $[\omega]^\om$ the {\em Ramsey space}, that is the Polish space of infinite subsets of $\om$. Often it is convenient to quotient this space by the ideal of finite sets. For instance, if $A, B$ in $[\om]^\om$ then we write $A \subseteq^* B$, read ``$A$ is {\em almost contained in} $B$" if $A \setminus B$ is finite. Similarly we say that $A$ and $B$ are {\em almost equal}, denoted $A =^* B$, if their symmetric difference is finite, and we say that $A$ and $B$ are {\em almost disjoint}, denoted $A \cap B =^* \emptyset$, if their intersection is finite. Given a family $\mathcal A \subseteq [\om]^\om$ we say that $\mathcal A$ has the {\em finite intersection property} if any finite subfamily has infinite intersection. If $\mathcal A$ has the finite intersection property then it may have a {\em pseudo-intersection},  i.e. a set $B\in [\om]^\om$ so that $B \subseteq^* A$ for every $A \in \mathcal A$. The cardinal characteristic $\mathfrak{p}$ is the least size of a family with the finite intersection property with no pseudo-intersection. An ultrafilter is is said to be {\em principal} if it contains a singleton and {\em non-principal} otherwise. Unless otherwise stated we will assume all ultrafilters are non-principal. If $\mathscr{U}$ is an ultrafilter then a {\em base} for $\mathscr{U}$ is a subset $\mathcal B \subseteq \mathscr{U}$ so that every element $A \in \mathscr{U}$ almost contains some $B \in \mathcal{B}$. In this case, we say that $\mathcal B$ {\em generates} $\mathscr{U}$, sometimes denoted $\langle \mathcal B \rangle$. The cardinal $\mathfrak{u}$, the {\em ultrafilter number}, is the least size of a non-principal ultrafilter base. 
%We are interested in special types of ultrafilters. 
For an ultrafilter  $\mathscr{U}$ we say that
\begin{enumerate}
    \item $\mathscr{U}$ is a {\em $p$-point} if every countable subfamily of $\mathscr{U}$ has a pseudo-intersection in $\mathscr{U}$;
    \item $\mathscr{U}$ is a {\em q-point} if every partition of $\om$ into finite sets $\{I_n\}_{n < \om}$ there is a $U \in \mathscr{U}$ so that $|U \cap I_n| = 1$ for each $n < \om$;
    \item $\mathscr{U}$ is {\em Ramsey}, or, {\em selective} if it is a $p$-point and a q-point.
    \item $\mathscr{U}$ is a $p_{\mathfrak{c}}$-point if any $\mathscr{F}\subseteq\mathscr{U}$, $|\mathscr{F}|<\mathfrak{c}$ has a pseudo-intersection in $\mathscr{U}$.
\end{enumerate}
%\end{definition}

Finally, a family $\mathcal I \subseteq [\om]^\om$ is said to be {\em independent} if whenever $\mathcal A, \mathcal B$ are finite, disjoint, non-empty subfamilies of $\mathcal I$, the set $\bigcap A \setminus \bigcup \mathcal B$ is infinite. The least size of a maximal independent family is denoted $\mathfrak{i}$.

\section{Ultrafilters and ideal independence}

A central role in our proof of $\mathfrak{u}\leq\mathfrak{s}_{mm}$ is played by the following filters:

\begin{dfn}\label{complemented_filter_def}
Let $\mathscr{I}$ be an ideal independent family. For any $A\in\mathscr{I}$, let $\mathcal{F}(\mathscr{I},A)$ be the filter on generated by the family $\{A\setminus\bigcup F:F\in [\mathscr{I}]^{<\omega}\land A\notin F\}$.  We refer to the filters of the form $\mathcal{F}(\mathscr{I}, A)$ as the {\em complemented filters of} $\mathscr{I}$, while for a fixed $A \in \mathscr{I}$ we say that  $\mathcal{F}(\mathscr{I}, A)$ is the {\em complemented filter (of} $\mathscr{I}$) {\em corresponding to} $A$.
\end{dfn}

Note that an ideal independent family $\mathscr{I}$ is maximal if and only if every $X \in [\omega]^\omega$ is either in the ideal generated by $\mathscr{I}$ or belongs to at least one of the filters $\mathcal{F}(\mathscr{I}, A)$ (these two possibilities are not mutually exclusive). The name ``complemented" comes from this observation: under maximality every element of the complement of the ideal generated by $\mathscr{I}$ is in some completemented filter.

\begin{thm}\label{mainthm1}
$\mathfrak{u}\leq\mathfrak{s}_{mm}$.
\end{thm}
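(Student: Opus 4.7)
The strategy is to show that, given an (infinite) maximal ideal independent family $\mathscr{I}$ with $\kappa := |\mathscr{I}|$, the complemented filter $\mathcal{F}(\mathscr{I}, A)$ is already an ultrafilter on $A$ for any chosen $A \in \mathscr{I}$; since its base has cardinality at most $\kappa$, transferring along any bijection $A \to \omega$ produces an ultrafilter on $\omega$ of weight at most $\kappa$, establishing $\mathfrak{u} \leq \mathfrak{s}_{mm}$.

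The first step is a localization lemma: if $A \in \mathscr{I}$, $Y \in [A]^\omega$, and $Y \in \mathcal{F}(\mathscr{I}, C)$ for some $C \in \mathscr{I}$, then $C = A$. Indeed, $Y \supseteq^* C \setminus \bigcup F$ combined with $Y \subseteq A$ yields $C \subseteq^* A \cup \bigcup F$, and ideal independence forces $C \in \{A\} \cup F$, whence $C = A$ since $C \notin F$. Combined with the characterization of maximality stated after Definition \ref{complemented_filter_def}, this implies every $Y \in [A]^\omega$ lies in $\mathcal{F}(\mathscr{I}, A) \cup \langle \mathscr{I} \rangle$.

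The core claim is that $\mathcal{F}(\mathscr{I}, A)$ is an ultrafilter on $A$. Suppose for contradiction that $Y \in [A]^\omega$ with $A \setminus Y$ also infinite satisfies $Y, A \setminus Y \notin \mathcal{F}(\mathscr{I}, A)$. By the localization, both lie in $\langle \mathscr{I} \rangle$, but only trivially via the cover $F = \{A\}$. To extract nontrivial information, I apply maximality to an auxiliary set $\tilde{Y} = Y \cup W$ for a carefully chosen $W \subseteq \omega \setminus A$, designed so that $\tilde{Y} \notin \langle \mathscr{I} \rangle$ (secured since $\omega \setminus A \notin \langle \mathscr{I} \rangle$, as otherwise a finite subfamily together with $A$ would cover $\omega$, contradicting ideal independence for any element outside the cover) and so that $\tilde{Y} \notin \mathcal{F}(\mathscr{I}, C)$ for any $C \in \mathscr{I} \setminus \{A\}$. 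Maximality then forces $\tilde{Y} \in \mathcal{F}(\mathscr{I}, A)$; restricting to $A$ gives $Y \in \mathcal{F}(\mathscr{I}, A)$, the desired contradiction.

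The main obstacle is the construction of $W$. The naive choice $W = \omega \setminus A$ fails, since $\tilde{Y} \supseteq \omega \setminus A \supseteq^* C \setminus A = C \setminus \bigcup\{A\}$, placing $\tilde{Y}$ trivially in $\mathcal{F}(\mathscr{I}, C)$ for every $C \neq A$ via the cover $G \ni A$. Thus $W$ must simultaneously fail to almost-contain $C \setminus \bigcup G$ for every $C \in \mathscr{I} \setminus \{A\}$ and every finite $G \subseteq \mathscr{I} \setminus \{C\}$; equivalently, $W$ must ``split'' each trace $(C \setminus A) \setminus \bigcup G'$ with $G' \subseteq \mathscr{I} \setminus \{A, C\}$. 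Producing such $W$ through a delicate splitting-style construction on these traces, constrained and ultimately enabled by the maximality of $\mathscr{I}$ and ideal independence, is the combinatorial heart of the argument.
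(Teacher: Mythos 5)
Your reduction is fine as far as it goes: if some complemented filter of a maximal ideal independent family $\mathscr{I}$ were an ultrafilter, it would have a base of size at most $|\mathscr{I}|$, giving $\mathfrak{u}\leq|\mathscr{I}|$, and your localization lemma ($Y\subseteq A$, $Y\in\mathcal{F}(\mathscr{I},C)$ implies $C=A$) is correct. But your core claim --- that $\mathcal{F}(\mathscr{I},A)$ is an ultrafilter on $A$ for \emph{every} maximal ideal independent family and \emph{every} $A\in\mathscr{I}$ --- is false, and the paper itself records the counterexample right after this theorem: any completely separable MAD family $\mathcal{A}$ (such families exist under $\mathsf{CH}$, and more generally under $\mathfrak{s}\leq\mathfrak{a}$ or $2^{\aleph_0}<\aleph_\omega$) is a maximal ideal independent family in which $\mathcal{F}(\mathcal{A},A)$ is merely the filter of cofinite subsets of $A$. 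Concretely, the construction of $W$ that you defer as the ``combinatorial heart'' is impossible there: take $Y\subseteq A$ with $Y$ and $A\setminus Y$ infinite, and any $W\subseteq\omega\setminus A$ for which $\tilde{Y}=Y\cup W$ is positive with respect to the ideal generated by $\mathcal{A}$; complete separability yields $B\in\mathcal{A}$ with $B\subseteq\tilde{Y}$, necessarily $B\neq A$ since $\tilde{Y}\cap A=Y$ is co-infinite in $A$, and since $B\setminus\bigcup F=^*B$ for every finite $F\subseteq\mathcal{A}\setminus\{B\}$ we get $\tilde{Y}\in\mathcal{F}(\mathcal{A},B)$ --- exactly the situation your plan must exclude. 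So the missing step is not merely delicate; no choice of $W$ can work in general.

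The structural reason is that your outline never uses any bound on $|\mathscr{I}|$, whereas the hypothesis being refuted, $|\mathscr{I}|=\mathfrak{s}_{mm}<\mathfrak{u}\leq\mathfrak{c}$, must enter essentially. The paper's proof extracts a countable subfamily $\{A_n\}_{n\in\omega}$ whose union is almost $\omega$, uses that each filter $\mathcal{F}(\mathscr{I},A_n)\upharpoonright B_n$ is generated by fewer than $\mathfrak{u}$ sets and hence is not an ultrafilter to produce, for each $s\in 2^{<\omega}$, pairwise disjoint positive sets $D_s$ outside the filter, and then applies maximality to the continuum many branch sets $D^f$ together with a pigeonhole argument over $|\mathscr{I}|<\mathfrak{c}$ to reach a contradiction with ideal independence. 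Even the conclusion one can salvage from that argument is only that for $|\mathscr{I}|<\mathfrak{c}$ all but finitely many complemented filters are ultrafilters; a cardinality-free claim of ultrafilterness for a fixed, arbitrarily chosen $A$ is not available, and your proposal has no mechanism (no counting, no use of the minimality of $\mathscr{I}$) that could replace it.
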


\begin{proof}
Assume otherwise $\mathfrak{s}_{mm}<\mathfrak{u}$ and let $\mathscr{I}$ be a maximal ideal independent family of minimal cardinality. Maximality implies that there is $\{ A_n\}_{n\in\omega}\subseteq\mathscr{I}$ whose union is almost equal to $\omega$. Define $B_0=A_0$ and for $n>0$, $B_n=A_n\setminus\bigcup_{i<n}A_n$. For each $n\in\omega$, let $\mathcal{F}_n$ be the filter $\mathcal{F}(\mathscr{I},A_n)\upharpoonright B_n$. Since $\mathcal{F}_n$ is not an ultrafilter, for any $s\in 2^{n}$ there is $D_s\in\mathcal{F}_n^+\setminus \mathcal{F}_n$ such that for different $s,r\in 2^n$, $D_r\cap D_s=\emptyset$. Now, for each $f\in 2^\omega$, define $D^f=\bigcup_{n\in\omega}D_{f\upharpoonright n}$. Note that $D^f$ is not almost contained in the union of finitely many elements from $\mathscr{I}$. Indeed, for any finite $F\subseteq\mathscr{I}$, let $n\in\omega$ be such that $A_n\notin F$. We can assume that $\{A_i:i<n\}\subseteq F$. Since $D_{f\upharpoonright n}$ is $\mathcal{F}_n$-positive, it can not be covered by $\bigcup F$. It follows that $D^f$ can not be covered by $\bigcup F$ either. By maximality of $\mathscr{I}$, for any $f\in 2^\omega$, there are $A_f\in\mathscr{I}$ and $F_f\in[\mathscr{I}\setminus\{A_f\}]^{<\omega}$ such that $A_f\setminus \bigcup F_f\subseteq^* D^f$. Note that for no $n\in\omega$ it is the case that $A_{n}=A_f$: otherwise, for some $n\in\omega$ we would have $A_{n}\setminus F_f\subseteq^* D^f$, which implies $B_{n}\setminus \bigcup F_f=A_{n}\setminus\left(\bigcup F_f\cup\bigcup_{i< n}A_i\right)\subseteq^* D_{f\upharpoonright n}$, contradicting the choice of the set $D_{f\upharpoonright n}$. Since $\mathfrak{s}_{mm}<\mathfrak{c}$, there are different $f,g\in 2^\omega$ such that $A_f=A_g$ and $F_f=F_g$. By construction, we have that $A_f\setminus F_f\subseteq^* D^f\cap D^g\subseteq\bigcup_{i\leq n_0} B_i$, where $n_0\in\omega$ is the maximal natural number such that $f\upharpoonright n_0=g\upharpoonright n_0$.  But $\bigcup_{i\leq n_0}B_i=\bigcup_{i\leq n_0}A_{i}$, which means that $A_f\setminus \bigcup F_f\subseteq^* \bigcup_{i\leq n_0}A_{n_0}$, and so $A_f\subseteq^*\bigcup F\cup\bigcup_{i\leq n_0}A_i$, a contradiction.
\end{proof}

The above proof shows that whenever $\mathscr{I}$ is a maximal ideal independent family such that  $|\mathscr{I}|<\mathfrak{c}$, then there are at most finitely many $A\in\mathscr{I}$ for which the corresponding complemented filter is not an ultrafilter. Indeed, if there were infinitely many such $A$'s, one could add them to the family $\{A_n\}_{n\in\omega}$ in the above proof and proceed along the same lines to reach a contradiction. However, this is not necessarily the case for ideal independent families with cardinality $\mathfrak{c}$, as for example any completely separable maximal almost disjoint  family $\mathcal{A}$ is a maximal ideal independent family \footnote{Recall that a maximal almost disjoint family $\mathcal{A}$ is completely separable if for any $X\in[\omega]^\omega$, there is $B\in\mathcal{A}$ such that $B\subseteq X$ or $X$ belongs to the ideal generated by $\mathcal{A}$.} and for any $A\in\mathcal{A}$, the corresponding complemented filter $\mathcal{F}(\mathscr{I},A)$ is the collection of cofinite subsets in $A$. It remains of interest to characterise those ideal independent families  $\mathscr{I}$ for which there is $A \in \mathscr{I}$ such that  $\mathcal{F}(\mathscr{I}, A)$ is an ultrafilter. 
%%Possible d of the paper
%For a further discussion and some interesting remaining open questions see the last section %of this paper. 
%%%%%%%%%Move to the last section%%%%%%%%%%%%
The following two questions remain open:

\begin{question}
Is it consistent that there are no maximal ideal independent families $\mathscr{I}$ with the property that $\mathcal{F}(\mathscr{I}, A)$ is an ultrafilter for some $A \in \mathscr{I}$ ?
\end{question}

As pointed out above, a positive answer to this question implies $\mathfrak{s}_{mm} = 2^{\aleph_0}$.

\begin{question}
Is it consistent that for every maximal ideal independent family $\mathscr{I}$ there is an $A\in\mathscr{I}$ so that $\mathcal{F}(\mathscr{I}, A)$ is an ultrafilter?
\end{question}

A positive answer to this question would imply that there are no completely separable maximal almost disjoint families. It is known that such families exist under either $\mathfrak{s} \leq \mathfrak{a}$ or $2^{\aleph_0} < \aleph_\omega$, see \cite{heike-dilip-juris} and \cite{hrusak-separable-mad}, respectively. %as well as \cite{osvaldo} for further literature and an excellent exposition on completely separable mad families. 
%%%%%%%Move to the last section%%%%%%%%%%%%%%%
In contrast, we show that at least under certain assumptions there are maximal ideal independent families $\mathscr{I}$ so that {\em every} complemented filter of $\mathscr{I}$ is an ultrafilter:

\begin{prp}
If $\mathfrak{p} = \mathfrak{c}$ then there is a maximal ideal independent family $\mathscr{I}$ with the property that for every $A \in \mathscr{I}$ the corresponding complemented filter $\mathcal{F}(\mathscr{I}, A)$ is a $p_{\mathfrak{c}}$-point.
\end{prp}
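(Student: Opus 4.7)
The plan is to construct $\mathscr{I}=\{A_\alpha:\alpha<\mathfrak{c}\}$ by transfinite recursion of length $\mathfrak{c}$, exploiting that $\mathfrak{p}=\mathfrak{c}$ (by Bell's theorem) yields $\mathrm{MA}_{<\mathfrak{c}}(\sigma\text{-centered})$, $\mathfrak{c}$ regular, and $2^{<\mathfrak{c}}=\mathfrak{c}$. I would fix a bookkeeping of order type $\mathfrak{c}$ that repeats cofinally three task types: (M) a set $X\in[\omega]^\omega$ to be absorbed into $\mathscr{I}$ for maximality; (U) a pair $(\beta,Y)$ with $Y\subseteq A_\beta$, with the goal of putting $Y$ or $A_\beta\setminus Y$ into $\mathcal{F}(\mathscr{I},A_\beta)$; and (P) a pair $(\beta,\mathcal{G})$ with $\mathcal{G}\subseteq\mathcal{F}(\mathscr{I}_\alpha,A_\beta)$ and $|\mathcal{G}|<\mathfrak{c}$, with the goal of producing a pseudo-intersection of $\mathcal{G}$ lying in $\mathcal{F}(\mathscr{I},A_\beta)$. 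The only invariant carried along is that $\mathscr{I}_\alpha$ is an infinite ideal independent family of size $<\mathfrak{c}$; the filters $\mathcal{F}(\mathscr{I}_\alpha,A_\beta)$ are then determined by $\mathscr{I}_\alpha$ itself, so no separate promise structure is needed.

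The technical heart is an \emph{extension lemma}: for such $\mathscr{I}$, $A^*\in\mathscr{I}$, and $Y\subseteq A^*$ positive for $\mathcal{F}(\mathscr{I},A^*)$ (i.e., $Y\setminus\bigcup F$ infinite for every finite $F\subseteq\mathscr{I}\setminus\{A^*\}$), there exists $B\in[\omega]^\omega$ such that $\mathscr{I}\cup\{B\}$ is ideal independent and $A^*\setminus Y\subseteq^* B$, so that $Y\in\mathcal{F}(\mathscr{I}\cup\{B\},A^*)$. I would produce $B=(A^*\setminus Y)\cup C$ with $C\subseteq\omega\setminus A^*$ obtained by $\mathrm{MA}_{<\mathfrak{c}}(\sigma\text{-centered})$ applied to the Mathias-type order on pairs $(s,H)$, meeting the two families of density requirements
\[
C\setminus\textstyle\bigcup F\text{ is infinite, for every finite }F\subseteq\mathscr{I},
\]
\[
(A\setminus A^*)\setminus(C\cup\textstyle\bigcup F)\text{ is infinite, for }A\in\mathscr{I}\setminus\{A^*\},\ F\subseteq\mathscr{I}\setminus\{A,A^*\}\text{ finite}.
\]
Each collection has $<\mathfrak{c}$ members, and each is genuinely dense using that in an infinite ideal independent family no finite subfamily covers $\omega$ nor any member beyond a finite set, so that $\omega\setminus(A^*\cup\bigcup F)$ and $(A\setminus A^*)\setminus\bigcup F$ are always infinite.

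With this lemma the three task types are handled directly. For (M), if $X_\alpha$ is neither in the ideal of $\mathscr{I}_\alpha$ nor in any $\mathcal{F}(\mathscr{I}_\alpha,A_\beta)$, then ideal independence of $\mathscr{I}_\alpha\cup\{X_\alpha\}$ is automatic and one sets $A_\alpha:=X_\alpha$. For (U), if $Y$ is positive for $\mathcal{F}(\mathscr{I}_\alpha,A_\beta)$ apply the extension lemma; otherwise $A_\beta\setminus Y$ is already in the filter. For (P), first use $\mathfrak{p}=\mathfrak{c}$ to pick a pseudo-intersection $Z\subseteq A_\beta$ of $\mathcal{G}\cup\mathcal{F}(\mathscr{I}_\alpha,A_\beta)$ — which is automatically positive, since $Z\subseteq^* A_\beta\setminus\bigcup F$ for every generator — and then apply the extension lemma with $Y=Z$. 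A crucial observation (already implicit in the proof of Theorem \ref{mainthm1}) is that preserving ideal independence automatically preserves every earlier commitment: were both $Y$ and $A_\gamma\setminus Y$ to lie in $\mathcal{F}(\mathscr{I}_{\alpha+1},A_\gamma)$, writing $A_\gamma=Y\cup(A_\gamma\setminus Y)$ would force $A_\gamma$ into the trace ideal of $\mathscr{I}_{\alpha+1}$ on $A_\gamma$, contradicting ideal independence. After $\mathfrak{c}$ stages, $\mathscr{I}=\bigcup_{\alpha<\mathfrak{c}}\mathscr{I}_\alpha$ is maximal ideal independent, each complemented filter is an ultrafilter on its base, and each is a $p_\mathfrak{c}$-point.

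The main obstacle is the extension lemma itself, and specifically verifying that the requirements on $C$ are genuinely dense in the $\sigma$-centered poset; this reduces to the two combinatorial facts highlighted above, together with the positivity of $Y$ on $A^*$, which is precisely what allows the ``$A^*\setminus Y\subseteq^* B$'' side of the construction to survive without trivializing the complemented filter of $A^*$.
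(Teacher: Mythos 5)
Your proposal is correct and takes essentially the same route as the paper: your extension lemma (adding $B=(A^*\setminus Y)\cup C$ with $C$ produced by $\mathrm{MA}(\sigma\text{-centered})$ from $\mathfrak{p}=\mathfrak{c}$) is exactly the paper's odd-stage move $A_{\alpha+1}=(A_{\beta_{\alpha+1}}\setminus Y^*_\alpha)\cup Y^{**}_\alpha$ applied after taking a pseudo-intersection, and your (M) step is a minor variant of the paper's even step (the paper adds an almost disjoint subset of $X_\alpha$ rather than $X_\alpha$ itself). One small repair: in task (P) apply $\mathfrak{p}=\mathfrak{c}$ to $\mathcal{G}$ together with the generating family $\{A_\beta\setminus\bigcup F : F\in[\mathscr{I}_\alpha\setminus\{A_\beta\}]^{<\omega}\}$, which has size $<\mathfrak{c}$, rather than to the full filter $\mathcal{F}(\mathscr{I}_\alpha,A_\beta)$, which may already have cardinality $\mathfrak{c}$.
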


%Recall that a $p_{\mathfrak{c}}$-point is an ultrafilter with the property that any family %of less than $\mathfrak{c}$ many elements has a pseudo-intersection in the ultrafilter.

\begin{proof}
Observe that if $\mathfrak{p} = \mathfrak{c}$, $\mathcal{Z}\subseteq [\om]^\om$, $|\mathcal{Z}| < \mathfrak{c}$ and $X$ is not in the ideal generated by $\mathcal{Z}$, then there is a $Y \in [X]^\omega$ which is almost disjoint from every element of $\mathcal{Z}$. Indeed, consider the forcing notion $\mathbb{P}_{X, \mathcal Z}$ consisting of pairs $(s, F)\in [X]^{<\omega}\times[\mathcal{Z}]^{<\omega}$, where $(s_1, F_1) \leq (s_0, F_0)$ if $s_1 \supseteq s_0$, $F_1 \supseteq F_0$ and 
$(s_1 \setminus s_0)\cap(\bigcup F_0)=\emptyset$. It is easy to see that $\mathbb{P}_{X, \mathcal Z}$ is $\sigma$-centered and adds a subset of $X$ which is almost disjoint from every element of $\mathcal{Z}$. Since $\hbox{MA}(\sigma\hbox{-centered})$ holds (by $\mathfrak{p} = \mathfrak{c}$, see \cite{blass_handbook}), the claim follows.

The family $\mathscr{I}$ is defined recursively. Fix an enumeration of $[\om]^\om$ indexed by the even ordinals, $\{X_\alpha \; | \; \alpha < \mathfrak{c}, \; {\rm even}\}$ so that every element of $[\om]^\om$ appears unboundedly often and an enumeration of $2^{\aleph_0}$ indexed by the odd ordinals $\{\beta_\alpha, \; | \; \alpha < \mathfrak{c} \; {\rm odd}\}$ so that every ordinal appears unboundedly often, $\beta_\alpha \leq \alpha$ for all $\alpha$, and for every pair $(X, \gamma) \in [\omega]^\omega \times \mathfrak{c}$ there is an $\alpha$ such that $(X, \gamma) = (X_\alpha, \beta_{\alpha+1})$. Let $\mathscr{I}_\omega = \{A_n\}_{n\in\omega}$ be a partition of $\om$ into infinitely many infinite sets. Suppose $\mathscr{I}_\alpha = \{A_\gamma\}_{\gamma < \alpha}$ has been defined for some even $\alpha < \mathfrak{c}$ (limits are even) and  $\mathscr{I}_\alpha$ is ideal independent. First we define a set $A_\alpha$. If $X_\alpha$ is not in the ideal generated by $\mathscr{I}_\alpha$ then by the above observation, there is $A_\alpha \subseteq X_\alpha$ which is almost disjoint from every $A_\gamma$, for $\gamma < \alpha$. Otherwise take $A_\alpha$ be an arbitrary set such that $\mathscr{I}_\alpha \cup \{A_\alpha\}$ is ideal independent. Next, take $\mathscr{I}_{\alpha+1} = \mathscr{I}_\alpha \cup \{A_\alpha\}$ and define $A_{\alpha+1}$ as follows: Let $\mathcal F_{\alpha+1}$ be the filter $\mathcal F(\mathscr{I}_{\alpha+1}, A_{\beta_{\alpha + 1}})$. At least one of the sets $X_\alpha$, $\omega \setminus X_\alpha$ is $\mathcal F_{\alpha+1}$-positive. Let $Y_\alpha$ be this one (if they both are, choose $X_\alpha$). Since $\mathfrak{p} = \mathfrak{c}$, there are  a pseudo-intersection   $Y^*_\alpha$ of $\mathcal{F}_{\alpha+1} \cup \{Y_\alpha\}$ and 
a set $Y^{**}_\alpha$ which is almost disjoint from every element of $\mathscr{I}_{\alpha+1} \cup \{Y^*_\alpha\}$. Finally, take $A_{\alpha+1} = (A_{\beta_{\alpha+1}} \setminus Y^*_\alpha) \cup Y^{**}_\alpha$ and let $\mathscr{I}_{\alpha+2} = \mathscr{I}_{\alpha+1} \cup \{A_{\alpha+1}\}$. This completes the construction.

Let $\mathscr{I} = \bigcup_{\alpha < \mathfrak{c}} \mathscr{I}_\alpha$. To see that $\mathscr{I}$ is ideal independent, it suffices to show that  $\mathscr{I}_{\alpha+1} \cup \{A_{\alpha+1}\}$ is ideal independent for $\alpha$ even. To see this, note that $A_{\alpha+1}$ is not in the ideal generated by $\mathscr{I}_{\alpha+1}$, since it contains a set almost disjoint from every element of $\mathscr{I}_{\alpha+1}$. Suppose now that there are finite $F \subseteq \mathscr{I}_{\alpha+1}$ and $A \in \mathscr{I}_{\alpha+1}$ such that $A \setminus \bigcup F \subseteq^* A_{\alpha+1}$. Since $Y^{**}_\alpha$ is almost disjoint from every element of the family  $\mathscr{I}_{\alpha+1}$, it follows that $A \setminus \bigcup F \subseteq^* A_{\beta_{\alpha+1}} \setminus Y^{*}_\alpha \subseteq^* A_{\beta_{\alpha + 1}}$ and thus by ideal independence of $\mathscr{I}_{\alpha+1}$ we obtain $A = A_{\beta_{\alpha+1}}$. However, this implies $Y^*_\alpha \subseteq^* A_{\beta_{\alpha+1}}\setminus \bigcup F \subseteq^* A_{\beta_{\alpha+1}} \setminus Y^*_\alpha$ which is a contradiction. 
To see that $\mathscr{I}$ is maximal, note that if $X$ is not in the ideal generated by $\mathscr{I}$ then by the even part of the construction there is $Y \in \mathscr{I}$, such that $Y \subseteq X$ and hence $\mathscr{I} \cup \{X\}$ is not ideal independent.

Finally, we show that $\mathcal{F}(\mathscr{I}, A)$ is a $p_{\mathfrak{c}}$-point. For each odd $\alpha$, say $\alpha=\alpha_0+1$,  $A_{\beta_\alpha} \setminus A_{\alpha} =^* A_{\beta_\alpha} \setminus (A_{\beta_\alpha} \setminus Y^*_{\alpha_0})  = Y^*_{\alpha_0}$ and so $Y^*_{\alpha_0}$ is in $\mathcal F(\mathscr{I}, A_{\beta_\alpha})$. Since every $\beta_\alpha$ is enumerated unboundedly often, pseudo-intersections are added unboundedly often along the construction and  so $\mathcal F(\mathscr{I}, A_{\beta_\alpha})$ is a $p_{\mathfrak{c}}$-point, provided it is an ultrafilter. It remains to observe that $Y^*_{\alpha_0}$ is a pseudo-intersection of either $X_\alpha$ or $\omega\setminus X_\alpha$ and so one of those sets is in the filter.
\end{proof}

\section{Arbitrarily Large  Maximal Ideal Independent Families}

In this section we examine the question of how to adjoin via forcing maximal ideal independent family of arbitrary size and thus begin an investigation of the spectrum of such families. The {\em spectrum} of maximal ideal independent families, denoted ${\rm spec}(\mathfrak{s}_{mm})$, is defined as the set of all cardinalities of maximal ideal independent families. Throughout $V$ denotes the ground model and $\mathbb{C}$ denotes the poset for adding a single Cohen real.

\begin{lemma}\label{extension_lemma}
Let $\mathscr{I}$ be an ideal independent family. There is a $ccc$ forcing $\mathbb{P}(\mathscr{I})$ which adds a set $z$ such that in $V^{\mathbb{P}(\mathscr{I})}$:
\begin{enumerate}
\item $\mathscr{I}\cup\{z\}$ is an ideal independent family, and 
\item for each $y\in V\cap ([\omega]^\omega\backslash\mathscr{I})$ the family $\mathscr{I}\cup\{z,y\}$ is not ideal independent.
\end{enumerate}
\end{lemma}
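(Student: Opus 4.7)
My plan is to define $\mathbb{P}(\mathscr{I})$ as a $\sigma$-centered (hence ccc) forcing whose conditions finitely approximate $z$ together with finite side-condition data from $\mathscr{I}$ and a finite list of destruction commitments for $V$-reals. A condition is a triple $p=(s_p,F_p,\mathcal{H}_p)$, where $s_p\in 2^{<\omega}$ is an initial segment of the characteristic function of $z$, $F_p\in[\mathscr{I}]^{<\omega}$ records members of $\mathscr{I}$ that the new bits of $z$ must avoid, and $\mathcal{H}_p$ is a finite set of \emph{destruction tuples} of either the form $(y,A,F)$ with $A\in\mathscr{I}$ and $F\in[\mathscr{I}\setminus\{A\}]^{<\omega}$, encoding the promise $z\supseteq^*(A\setminus\bigcup F)\setminus y$, or the form $(y,\mathrm{abs})$, encoding the promise $z\supseteq^* y$. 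Extensions refine each coordinate monotonically, subject to all active promises; any two conditions sharing $s_p$ are compatible via union of $F$'s and $\mathcal{H}$'s, so the forcing is $\sigma$-centered.

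Letting $z=\bigcup\{s_p^{-1}(1):p\in G\}$, clause (1) would follow from standard density arguments. Since $\omega\setminus\bigcup F$ is infinite for every finite $F\subseteq\mathscr{I}$ (by ideal independence of $\mathscr{I}$), the set of conditions forcing $|z\setminus\bigcup F|>n$ is dense; and by adjoining $A$ to $F_p$ at appropriate stages one guarantees that $A\setminus(\bigcup F\cup z)$ remains infinite for each $A\in\mathscr{I}$ and finite $F\subseteq\mathscr{I}\setminus\{A\}$. The Cohen real in the $s$-coordinate of the forcing thus gives rise to a generic $z$ for which $\mathscr{I}\cup\{z\}$ is ideal independent.

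Clause (2) is the core of the lemma, handled by case analysis on $y\in V\cap([\omega]^\omega\setminus\mathscr{I})$. When $y$ lies in the ideal generated by $\mathscr{I}$, or when $\mathscr{I}\cup\{y\}$ is already not ideal independent, there is nothing to do. Otherwise I would verify that the set of conditions committing to destroy $y$ is dense. If there exist $A\in\mathscr{I}$ and $F\in[\mathscr{I}\setminus\{A\}]^{<\omega}$ with $(A\cap y)\setminus\bigcup F$ infinite, commit $(y,A,F)$ to $\mathcal{H}_p$; in the extension $A\subseteq^*\bigcup F\cup y\cup z$, destroying ideal independence of $\mathscr{I}\cup\{z,y\}$. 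If instead $y$ is almost disjoint from every element of $\mathscr{I}$, commit $(y,\mathrm{abs})$; then $y\subseteq^* z$ places $y$ in the ideal generated by $\mathscr{I}\cup\{z\}$.

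The main obstacle is verifying that every destruction commitment is consistent with clause (1). After committing $z\supseteq^*(A\setminus\bigcup F)\setminus y$, the set $A\setminus(\bigcup F\cup z)$ is contained modulo finite in $(A\cap y)\setminus\bigcup F$, which was chosen infinite; so ideal independence of $\mathscr{I}\cup\{z\}$ at $A$ is preserved. Commitments of the form $z\supseteq^* y$ add only finitely many elements to $z\cap A$ for each $A\in\mathscr{I}$, since $y$ is almost disjoint from $A$. Because only finitely many commitments are ever active in a single condition and all commitments are ``eventual'' ($\subseteq^*$), a careful bookkeeping argument shows the requirements for (1) and the promises for (2) can be simultaneously met, with the Cohen-type genericity of the $s$-coordinate realising each density requirement.
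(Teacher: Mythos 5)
Your reduction of clause (2) to a family of ``destruction commitments'' hides the actual crux of the lemma, and as formulated the commitments are not jointly (or even singly) consistent with clause (1). For a single commitment: suppose $y\cap A$ is infinite but $y\cap A\subseteq A'$ for some other $A'\in\mathscr{I}$ (ideal independent families may have infinite pairwise intersections). Your criterion ``$(A\cap y)\setminus\bigcup F$ infinite'' is satisfied with $F=\emptyset$, but the commitment $z\supseteq^* A\setminus y$ yields $A\setminus(A'\cup z)\subseteq^*(A\cap y)\setminus A'=\emptyset$, so $A\subseteq^* A'\cup z$ and $\mathscr{I}\cup\{z\}$ is already not ideal independent; you would need $A\cap y$ to be positive with respect to the ideal generated by $\mathscr{I}\setminus\{A\}$, and once you require that, your two commitment types no longer cover all $y$ (e.g.\ $y=(A_1\cap A_2)\cup W$ with $A_1\cap A_2$ infinite and $W$ almost disjoint from $\mathscr{I}$). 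Worse, commitments for \emph{different} $y$'s interact: if $y_0,y_1\notin\mathscr{I}$ split some $A\in\mathscr{I}$ into two infinite pieces, the two commitments $(y_0,A,\emptyset)$ and $(y_1,A,\emptyset)$, each admissible by your rule and each reachable by your density argument, jointly force $A\subseteq^* z$, destroying clause (1). Since the generic must meet a destruction dense set for continuum many $y$'s, ``careful bookkeeping'' cannot be waved at: the choice of witness $(A,F)$ for each $y$ must cohere globally across all $y$'s simultaneously, and nothing in your poset enforces this. Moreover, even when a commitment is harmless, containment of $A\setminus(\bigcup F\cup z)$ in an infinite set does not make it infinite; keeping infinitely many points of $A\setminus\bigcup F'$ (for \emph{every} finite $F'$) out of $z$ is itself a density requirement that the other active commitments can block.

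This missing coherence is exactly what the paper's proof supplies. There one first adds a Cohen real and then fixes a filter $\mathcal{F}$ containing it which is \emph{maximal} with respect to the property that every $X\in\mathcal{F}$ meets every set $A\setminus\bigcup F$ ($A\in\mathscr{I}$, $F\in[\mathscr{I}\setminus\{A\}]^{<\omega}$) infinitely; then one forces with Mathias forcing $\mathbb{M}(\mathcal{F})$ and sets $z=\omega\setminus x$ for the Mathias generic $x$. All ``promises'' are of the uniform shape $x\subseteq^* X$ for $X\in\mathcal{F}$, so closure of $\mathcal{F}$ under finite intersections gives exactly the joint consistency you are missing, the positivity property of $\mathcal{F}$ gives clause (1), and the \emph{maximality} of $\mathcal{F}$ yields the dichotomy proving clause (2): either some $X\cap(B\setminus\bigcup F)\subseteq^* y$ with $X\in\mathcal{F}$, in which case $B\subseteq^*\bigcup F\cup y\cup z$, or else $\omega\setminus y$ can be added to $\mathcal{F}$, hence $\omega\setminus y\in\mathcal{F}$ and $y\subseteq^* z$. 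To repair your approach you would have to build, before (or while) forcing, a single coherent positive family deciding every ground-model $y$ one way or the other --- which is essentially reconstructing $\mathcal{F}$ and the Mathias poset.
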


\begin{proof}
Add a Cohen real to $V$ and consider a filter $\mathcal{F}$ which contains the Cohen real and is maximal with respect to the following property: for any $X\in\mathcal{F}$, any $A\in\mathscr{I}$ and finite $F\subseteq \mathscr{I}\setminus\{A\}$, $X\cap (A\setminus\bigcup F)$ is infinite.  Let  $\mathbb{M}(\mathcal{F})$ be Mathias forcing relativized to $\mathcal{F}$, let $x$ be the generic real added by $\mathbb{M}(\mathcal{F})$ over $V^{\mathbb{C}}$, let  $\dot{x}$ be $\mathbb{M}(\mathcal{F})$-name for $x$ (in $V^{\mathbb{C}}$) and let  $\mathbb{P}(\mathscr{I})=\mathbb{C}*\mathbb{M}(\dot{\mathcal{F}})$. 

%\bigskip
%\noindent\textbf{Claim 1.} $\mathscr{I}\cup\{\omega\setminus\dot{x}\}$ is an ideal independent family.

\begin{claim}
In $V^{\mathbb{P}(\mathscr{I})}$ the family $\mathscr{I}\cup\{\omega\setminus x\}$ is ideal independent.
\end{claim}
\begin{proof}
Let $F$ be a finite subset of $\mathscr{I}$. First we prove that $\bigcup F$ does not almost contain $\omega\setminus x$.  
Let $A\in\mathscr{I}\backslash F$, $(s,B)\in\mathbb{M}(\mathcal{F})$ and let $n\in\omega$ be arbitrary. Since the Cohen real belongs to $\mathcal{F}$, we can assume that $B$ is a subset of it, so $A\setminus(B\cup \bigcup F)$ is infinite. Let $k\in\omega$ be big enough so $[\max(s),k)\cap (A\setminus(B\cup \bigcup F))$ has more than $n$ elements. Then $(s\cup\{k\}, B)$ forces that $(\omega\setminus\dot{x})\setminus\bigcup F$ has more than $n$ elements and since $n$ was arbitrary, it follows that $(\omega\setminus x)\setminus\bigcup F$ is infinite. A genericity argument shows that $x\cap (A\setminus\bigcup F)$ is infinite for any $A\in\mathscr{I}$ and $F\in[\mathscr{I}\setminus\{A\}]^{<\omega}$, which implies that $(A\setminus\bigcup F)\setminus(\omega\setminus x)$ is infinite.
\end{proof}

\begin{claim}
Let $A\in ([\omega]^\omega\cap V)\backslash\mathscr{I}$. Then in $V^{\mathbb{P}(\mathscr{I})}$, $\mathscr{I}\cup\{\omega\setminus x, A\}$ is not ideal independent. 
\end{claim}

\begin{proof}
Let $A\in[\omega]^\omega\cap V$ be an arbitrary set. If there are $X\in\mathcal{F}$, $B\in\mathscr{I}$ and $F\in[\mathscr{I}\setminus\{B\}]^{<\omega}$ such that $X\cap(B\setminus\bigcup F)\subseteq^* A$, then $ x\cap(B\setminus\bigcup F)\subseteq^* A$. But $ x\cap(B\setminus\bigcup F)=(B\setminus\bigcup F)\setminus(\omega\setminus x)$, so $A$ can not be added to $\mathscr{I}\cup\{\omega\setminus x\}$, as witnessed by $B,F$ and $\omega\setminus x$. On the other hand, if for all $X\in\mathcal{F}$, $B\in\mathscr{I}$ and $F\in[\mathscr{I}\setminus\{B\}]^{<\omega}$ it happens that $X\cap(B\setminus \bigcup F)\nsubseteq^* A$, then $(\omega\setminus A)\cap X\cap(B\setminus \bigcup F)$ is infinite. Thus, by maximality of $\mathcal{F}$, $\omega\setminus A\in\mathcal{F}$, which implies that $x\subseteq^*\omega\setminus A$ and so $A\subseteq^*\omega\setminus x$. 
\end{proof}
This completes the proof of the Lemma.
\end{proof}

%Now, we can prove the Main Theorem \ref{mainthm2}.

\begin{thm}\label{mainthm2}
Assume $GCH$. Let $C$ be a set of uncountable cardinals and let $\kappa$ be a regular uncountable cardinal such that $\sup C\leq \kappa$. Then there is a $ccc$ generic extension in which $$C\subseteq{\rm spec}(\mathfrak{s}_{mm}).$$
\end{thm}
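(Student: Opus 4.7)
The plan is to construct, as a single ccc forcing $\mathbb{P}$, generic maximal ideal independent families of every cardinality $\lambda\in C$ simultaneously, each built by $\lambda$ many successive applications of the step forcing $\mathbb{P}(\cdot)$ from Lemma \ref{extension_lemma}. Concretely, I would set $\mathbb{P}=\prod_{\lambda\in C}\mathbb{P}_\lambda$ (finite-support product), where each $\mathbb{P}_\lambda$ is a finite-support iteration of length $\lambda$ with iterand $\dot{\mathbb{Q}}_\lambda^\alpha=\mathbb{P}(\dot{\mathscr{I}}_\lambda^\alpha)$ and $\dot{\mathscr{I}}_\lambda^\alpha=\{\dot z_\lambda^\beta:\beta<\alpha\}$ the partial family built on the $\lambda$-track up to stage $\alpha$. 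Lemma \ref{extension_lemma} then says each step extends the current family by one new ideal-independent element $z_\lambda^\alpha$ while simultaneously sealing the family against every ground-model real outside it.

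Next come the routine verifications. The iterand $\mathbb{P}(\mathscr{I})=\mathbb{C}*\mathbb{M}(\dot{\mathcal{F}})$ is $\sigma$-centered, being a two-step iteration of $\sigma$-centered forcings, so by standard chain-condition preservation (using GCH) each $\mathbb{P}_\lambda$ is ccc and so is the product $\mathbb{P}$, with $|\mathbb{P}|\le\kappa$. Hence cardinals are preserved in the extension. An induction on $\alpha$ using clause (1) of Lemma \ref{extension_lemma} shows that each approximation $\mathscr{I}_\lambda^\alpha$ is ideal independent; the $z_\lambda^\alpha$'s are pairwise distinct (else ideal independence would immediately fail), so $\mathscr{I}_\lambda=\{z_\lambda^\alpha:\alpha<\lambda\}$ is ideal independent of size exactly $\lambda$.

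The main step is maximality of each $\mathscr{I}_\lambda$. Fix $\lambda\in C$ and $y\in V^{\mathbb{P}}\cap[\omega]^\omega$ not in the ideal generated by $\mathscr{I}_\lambda$; we need $\mathscr{I}_\lambda\cup\{y\}$ to fail ideal independence. Factor $\mathbb{P}=\mathbb{P}_\lambda\times Q$ with $Q=\prod_{\mu\in C\setminus\{\lambda\}}\mathbb{P}_\mu$ and let $(G_\lambda,G_Q)$ be the induced generic. By ccc of $\mathbb{P}$, a nice name for $y$ is determined by a countable antichain, hence supported on countably many stages of $\mathbb{P}_\lambda$; when $\lambda$ has uncountable cofinality those stages are bounded by some $\beta<\lambda$, so $y\in V[G_\lambda^{<\beta},G_Q]$. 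For any $\alpha\in[\beta,\lambda)$, the product lemma shows that $z_\lambda^\alpha$ is $\mathbb{P}(\mathscr{I}_\lambda^\alpha)$-generic over $V[G_\lambda^{<\alpha},G_Q]$, so clause (2) of Lemma \ref{extension_lemma} applied in that intermediate model yields $A\in\mathscr{I}_\lambda^{\alpha+1}$ and finite $F\subseteq\mathscr{I}_\lambda^{\alpha+1}\setminus\{A\}$ with $A\setminus\bigcup F\subseteq^* y$, a relation upward absolute to $V^{\mathbb{P}}$ which destroys ideal independence of $\mathscr{I}_\lambda\cup\{y\}$.

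The principal obstacles are: (i) correctly factoring the product and invoking the product lemma so as to place $y$ into the intermediate ground model for the stage-$\alpha$ iterand, while $z_\lambda^\alpha$ remains generic over that model; and (ii) handling $\lambda\in C$ of countable cofinality, where the bounded-support reflection used above can fail since a cofinal $\omega$-sequence of $\lambda$-iteration stages may be needed to express $y$. One resolution of (ii) is to restrict to the regular cardinals in $C$ (as in the variant stated in the introduction); a more delicate approach would apply Lemma \ref{extension_lemma} along a cofinal $\omega$-sequence of stages and verify that some finite combination of the corresponding $z_\lambda^{\alpha_n}$'s already seals $y$.
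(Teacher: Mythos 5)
There are two genuine gaps, and they are exactly the points your construction would have to get past. First, the cross-factor sealing step fails. At stage $\alpha$ of your iteration $\mathbb{P}_\lambda$, the iterand $\mathbb{P}(\mathscr{I}_\lambda^\alpha)=\mathbb{C}*\mathbb{M}(\dot{\mathcal F})$ is computed in $V[G_\lambda^{<\alpha}]$; in particular the filter $\mathcal F$ is chosen maximal with the positivity property only among filters of that model (after its Cohen real). The proof of clause (2) of Lemma \ref{extension_lemma} is a dichotomy whose second horn uses this maximality: if no $X\cap(B\setminus\bigcup F)$ is almost contained in $y$, then $\omega\setminus y\in\mathcal F$. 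If $y\in V[G_\lambda^{<\alpha},G_Q]$ but $y\notin V[G_\lambda^{<\alpha}]$, maximality in the smaller model gives nothing: $\mathcal F\cup\{\omega\setminus y\}$ may generate a strictly larger filter with the same property without contradicting it. The product lemma only tells you that $z_\lambda^\alpha$ is generic \emph{for the old poset} over $V[G_\lambda^{<\alpha},G_Q]$; it does not upgrade the conclusion of the lemma to reals of that bigger model. So a real added purely by the other factors $Q$ (or, within one track, depending cofinally on that track) is never sealed against $\mathscr{I}_\lambda$, and maximality of $\mathscr{I}_\lambda$ in $V^{\mathbb{P}}$ is unproven. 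This is precisely why the paper does not take a product of per-cardinal iterations: it first adds $\kappa$ Cohen reals so that ground-model ideal independent families $\mathscr{I}_\lambda$ of each size $\lambda\in C$ already exist, and then runs a \emph{single} finite support ccc iteration of length $\omega_1$, each round of which passes through all $\lambda\in C$ and applies $\mathbb{P}(\cdot)$ to the current version of each family. Since the iteration is ccc of length $\omega_1$, every real of the final model appears in some intermediate model $V[G_\beta]$, $\beta<\omega_1$, and is then sealed, for every family simultaneously, during the next round (where the relevant maximal filter is chosen in a model containing that real); each family gains only $\aleph_1$ new sets and so keeps cardinality $\lambda$.

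Second, the singular case is not optional. Theorem \ref{mainthm2} is stated for an arbitrary set of uncountable cardinals, and the remark immediately following it relies on cardinals of countable cofinality (e.g. $\aleph_\omega\in C$) to conclude that $|\mathscr{I}|$ can have countable cofinality; restricting to regular $\lambda$ proves a strictly weaker statement. Your proposed repair along a cofinal $\omega$-sequence $\alpha_n\to\lambda$ has no support in Lemma \ref{extension_lemma}: each application seals only against reals of the model \emph{over which it is forced}, never against reals appearing later, so there is no reason any finite combination of the $z_\lambda^{\alpha_n}$'s should handle a $y$ whose name is spread cofinally through the $\lambda$-iteration. The paper's arrangement sidesteps $\mathrm{cf}(\lambda)$ entirely: the cardinality $\lambda$ is supplied by a ground-model family, and the only length that must have uncountable cofinality is that of the sealing iteration, namely $\omega_1$.
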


\begin{proof}
Add $\kappa$ Cohen reals to the ground model $V$ to obtain a model of  $\mathfrak{c}=\kappa$ and for each $\lambda\in C$ let $\mathscr{I}_\lambda$ be an ideal independent family of cardinality $\lambda$. Let $\langle\lambda_\beta:\beta<\gamma\rangle$ be an enumeration of $C$. Proceed with a finite support iteration $\langle\mathbb{P}_\alpha,\dot{\mathbb{Q}}_\alpha:\alpha<\omega_1\rangle$ where each iterand is a finite support iteration of length, the cardinality of $C$, as follows:

Let $\mathbb{P}_0$ be the finite support iteration $\langle\mathbb{R}_\beta^0,\dot{\mathbb{S}}_\beta^0:\beta<\gamma\rangle$ defined by $\mathbb{R}_0^0=\mathbb{P}(\mathscr{I}_{\lambda_0})$ and $\mathbb{R}_\beta^0\Vdash\dot{\mathbb{S}}_\beta^0=\mathbb{P}(\mathscr{I}_{\lambda_\beta})$. After forcing with $\mathbb{P}_0$, for each $\beta<\gamma$, define $\mathscr{I}_\beta^0=\mathscr{I}_{\lambda_\beta}\cup\{x_\beta\}$, where $x_\beta$ is the real from Lemma \ref{extension_lemma} added by the $\beta$-step of the iteration $\mathbb{P}_0$.
Now, assume $\mathbb{P}_\beta$ and $\{\mathscr{I}_\alpha^\beta:\alpha<\gamma\}$ are defined. The next step $\dot{\mathbb{Q}}_\beta$ is the finite support iteration $\langle \mathbb{R}_\alpha^\beta,\dot{\mathbb{S}}_\alpha^\beta:\alpha<\gamma\rangle$ such that $\mathbb{R}_0^\beta=\mathbb{P}(\mathscr{I}_{0}^\beta)$ and $\mathbb{R}_\alpha^\beta\Vdash\dot{\mathbb{S}}_\alpha^\beta=
\mathbb{P}(\mathscr{I}_{\alpha}^\beta)$. In $V[G_{\beta+1}]$, after forcing with $\mathbb{P}_{\beta}*\dot{\mathbb{Q}}_\beta$, define $\mathscr{I}_{\alpha}^{\beta+1}=\mathscr{I}_{\alpha}^{\beta}\cup\{x_\beta\}$, where $x_\alpha$ is the real from Lemma \ref{extension_lemma} added by the $\alpha$-step of the iteration $\dot{\mathbb{Q}}_\beta$. If $\beta$ is a limit ordinal and $\mathbb{P}_\alpha$, $\mathscr{I}_\eta^\alpha$ are defined for all $\alpha<\beta$ and $\eta<\gamma$, let $\mathbb{P}_\beta$ be the finite support iteration $\langle\mathbb{P}_\alpha,\dot{\mathbb{Q}}_\alpha:\alpha<\beta\rangle$ and for $\eta<\gamma$, $\mathscr{I}_\eta^\beta=\bigcup_{\alpha<\beta}\mathscr{I}_\eta^\alpha$.

Let $\mathbb{P}_{\omega_1}$ be the above iteration and for any $\beta<\gamma$, let $\mathscr{J}_\beta=\bigcup_{\alpha<\omega_1}\mathscr{I}_\beta^\alpha$. Since any real $y$ in $V[G_{\omega_1}]$ is added in some intermediate extension, we have that $y$ is an element of $\mathscr{J}_\beta$ or it can not be added to $\mathscr{J}_\beta$. Then $\mathscr{J}_\beta$ is a maximal ideal independent family, and since $\mathbb{P}_{\omega_1}$ preserves all cardinals and we only added $\omega_1$ sets to the family $\mathscr{I}_\beta$ to obtain $\mathscr{J}_\beta$, $\mathscr{J}_\beta$ has size $\lambda_\beta$.
\end{proof}

The cardinality of a maximal ideal inde\-pen\-dent family can have countable cofinality, while the character of any ultrafilter is uncountable. The first assertion follows from the previous theorem by taking $\kappa>\aleph_\omega$. The second assertion can be found in \cite{ultrafilters_on_omega_cardinal_characteristics}.

\section{Forcing Invariant Maximal Ideal Independent Families}

In the following, we construct a maximal ideal independent family with strong combinatorial properties, which guarantee that its maximality is preserved by a large number of forcing notions.

\begin{dfn}\label{encompassing}
Let $\mathscr U$ be an ultrafilter. A maximal ideal independent family $\mathscr{I}$ is called $\mathscr{U}\hbox{-\em encompassing}$ if the following conditions hold:
\begin{enumerate}
    \item $\mathscr{U} \cap \mathscr{I} = \emptyset$, i.e. $\mathscr{I}$ is contained in the dual ideal of $\mathscr{U}$.
    \item For every $X \in \mathscr{U}$ the set of $A \in \mathscr{I}$ so that $X \in \mathcal{F}(\mathscr{I}, A)$ is co-countable. 
\end{enumerate}
\end{dfn}

\begin{thm}\label{mainthm3}
Assume $\mathsf{CH}$. For any $p$-point $\mathscr{U}$ there is a $\mathscr{U}$-encompassing maximal ideal independent family $\mathscr{I}$ such that for all $A\in\mathscr{I}$, the corresponding completemented filter $\mathcal{F}(\mathscr{I},A)$ is a $p$-point.
\end{thm}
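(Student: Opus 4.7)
The plan is a transfinite recursion of length $\omega_1$, combining the machinery of the previous proposition with a preliminary reduction that exploits the $p$-pointness of $\mathscr{U}$. Under $\CH$ and the $p$-point hypothesis on $\mathscr{U}$, I first fix a $\subseteq^*$-decreasing base $\{V_\alpha : \alpha < \omega_1\}$ of $\mathscr{U}$ with $V_0 = \omega$; this is the key preparatory device. Enumerate $[\omega]^\omega = \{X_\alpha : \alpha < \omega_1\}$ with each set appearing cofinally, together with bookkeeping pairs $(\beta_\alpha,\mathcal{C}_\alpha)$ for the $p$-point property of the complemented filters. I build $\mathscr{I}_\alpha = \{A_\beta : \beta < \alpha\}$ maintaining two invariants: $\mathscr{I}_\alpha$ is ideal independent, and each $A_\beta$ satisfies $A_\beta \subseteq^* V_\beta$ while being disjoint from some fixed member of $\mathscr{U}$ (so $A_\beta \notin \mathscr{U}$).

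At stage $\alpha$ I perform one of two moves, each a direct adaptation of an operation from the previous proposition. In the \emph{maximality move}, whenever $X_\alpha$ is not already in the ideal of $\mathscr{I}_\alpha$ and $X_\alpha \cap V_\alpha$ is infinite, I choose $A_\alpha \subseteq X_\alpha \cap V_\alpha$ almost disjoint from every element of $\mathscr{I}_\alpha$ and sitting inside $V_\alpha$ minus a designated element of $\mathscr{U}$, using the $\sigma$-centered forcing from the previous proposition together with $\mathfrak{p}=\mathfrak{c}$. In the \emph{$p$-point move}, I follow the template from the previous proposition to produce a pseudo-intersection $Y^{\ast}$ of $\mathcal{F}(\mathscr{I}_\alpha, A_{\beta_\alpha})$ augmented by an $\mathcal{F}$-positive set coming from the bookkeeping, together with an almost disjoint ``fresh'' set $Y^{\ast\ast}$ now additionally required to lie in $V_\alpha$ and miss a member of $\mathscr{U}$; then set $A_\alpha = (A_{\beta_\alpha} \setminus Y^{\ast}) \cup Y^{\ast\ast}$. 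The extra constraints $A_\alpha \subseteq^* V_\alpha$ and $A_\alpha \notin \mathscr{U}$ are preserved because $V_\alpha \in \mathscr{U}$ leaves enough room to run the requisite $\sigma$-centered forcings.

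Condition (1) of $\mathscr{U}$-encompassing is built into the invariants, while condition (2) comes out automatically from the base property: for any $U \in \mathscr{U}$ there is $\gamma < \omega_1$ with $V_\gamma \subseteq^* U$, and then $A_\alpha \subseteq^* V_\alpha \subseteq^* U$ for every $\alpha \geq \gamma$, so $U \in \mathcal{F}(\mathscr{I}, A_\alpha)$; the only possible exceptions $\{A_\beta : \beta < \gamma\}$ form a countable set. The $p$-point property of each $\mathcal{F}(\mathscr{I}, A_\beta)$ is verified exactly as in the previous proposition, and maximality for $X \in \mathscr{U}$ is a consequence of condition (2). The main obstacle I anticipate is maximality for $X \notin \mathscr{U}$: if $\beta(X) := \min\{\alpha : V_\alpha \subseteq^* \omega \setminus X\}$ is a countable ordinal (which can happen, since $\mathscr{U}$ is nontrivial), no $A_\gamma$ with $\gamma \geq \beta(X)$ can have $X \in \mathcal{F}(\mathscr{I}, A_\gamma)$ (as $A_\gamma \cap X$ would be finite, contradicting ideal independence), so such $X$ must be caught by a maximality move at some stage $\gamma < \beta(X)$ or else land in the ideal generated by $\mathscr{I}$. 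Arranging the enumeration so that every such $X$ is processed in time---and making sure the ``fresh'' sets at the $p$-point moves do not spoil an already-secured maximality witness---is, I expect, the most delicate part of the argument, and is where the decreasing base and the ultrafilter-like structure of the complemented filters have to be used carefully.
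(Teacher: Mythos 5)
There is a genuine gap, and it sits exactly where you flag it: maximality for sets $X\notin\mathscr{U}$. Your invariant $A_\beta\subseteq^* V_\beta$ makes this not merely delicate but structurally untenable as proposed. Fix $\gamma$ with $E=\omega\setminus V_\gamma$ infinite. Every $A_\delta$ with $\delta\geq\gamma$ meets $E$ in a finite set, so for any infinite $X\subseteq E$, membership of $X$ in the ideal generated by $\mathscr{I}$, or in some $\mathcal{F}(\mathscr{I},A)$, is decided (mod finite) entirely by the countable trace family $\{A_\delta\cap E:\delta<\gamma\}$: $X$ must be almost covered by finitely many traces, or almost contain an infinite set of the form $(A_\delta\cap E)\setminus\bigcup F_0$ with $\delta<\gamma$ and $F_0$ finite among the traces. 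There are only countably many sets of each kind, so unless $E$ itself is almost covered by finitely many $A_\delta$ with $\delta<\gamma$, a routine diagonalization produces an infinite $X\subseteq E$ realizing neither alternative, killing maximality. Your proposed remedy, ``arrange the enumeration so that every such $X$ is processed in time,'' cannot work: under $\CH$ there are $\aleph_1$ many infinite subsets of $E$, all with $\beta(X)\leq\gamma$, but only countably many stages below $\gamma$. So your scheme would be forced to guarantee that $\omega\setminus V_\gamma$ lies in the ideal generated by $\{A_\delta:\delta<\gamma\}$ for every $\gamma$, i.e.\ that $\mathscr{I}$ generates the whole dual ideal $\mathscr{U}^*$ in a level-by-level timely fashion; your moves (which at stage $\alpha$ only ever add almost-subsets of $V_\alpha$) provide no mechanism for this, and you never address it. A secondary problem: your $p$-point move $A_\alpha=(A_{\beta_\alpha}\setminus Y^*)\cup Y^{**}$ need not satisfy $A_\alpha\subseteq^* V_\alpha$, since $A_{\beta_\alpha}\setminus Y^*$ is only almost contained in $V_{\beta_\alpha}$, so even the stated invariant is not maintained by your own construction steps.

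The paper avoids all of this by not shrinking the family into the base. Members of $\mathscr{I}$ are kept large: at a stage where $X_\alpha\notin\mathscr{U}$ is positive for the ideal so far, one adds a \emph{pair} $A_0^\alpha, A_1^\alpha$, both covering $X_\alpha$ (so $X_\alpha$ falls into the generated ideal — maximality by covering, not by reaping a subset of $X_\alpha$), with $A_0^\alpha\setminus A_1^\alpha$ and $A_1^\alpha\setminus A_0^\alpha$ contained in the base set $Y_\alpha$. Condition (2) of encompassing then follows because for $U\in\mathscr{U}$ with $Y_\gamma\subseteq^* U$, the generator $A_i^\alpha\setminus A_{1-i}^\alpha\subseteq Y_\alpha\subseteq^* U$ witnesses $U\in\mathcal{F}(\mathscr{I},A_i^\alpha)$ for every $\alpha\geq\gamma$; candidates $X_\alpha\in\mathscr{U}$ are excluded for free exactly as you say. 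Finally, the $p$-point property of the complemented filters is obtained in the paper by a partition bookkeeping built into the recursion (yielding selective complemented filters), rather than by your pseudo-intersection move; if you want to salvage your approach, the pair-with-small-symmetric-difference device is the missing idea you would need to replace the $A_\beta\subseteq^* V_\beta$ invariant with.
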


\begin{proof}
Let $\mathscr{U}$ be a $p$-point and $\langle Y_\alpha:\alpha\in\omega_1\rangle$ be an $\subseteq^*$-decreasing sequence which generates the $p$-point $\mathscr{U}$. Let $\langle X_\alpha:\alpha\in\omega_1\rangle$ be an enumeration of all the infinite subsets of $\omega$. By recursion we construct a sequence $\langle\mathscr{I}_{\alpha}:\alpha\in[\omega,\omega_1)\rangle$ such that:
\begin{enumerate}
    \item For all $\alpha$, $\mathscr{I}_\alpha\subseteq\mathscr{U}^*$ is a countable ideal independent family.
    \item For all $\alpha$, if $X_\alpha \notin \mathscr{I}_{\alpha+1}$ then $\mathscr{I}_{\alpha+1}\cup\{X_\alpha\}$ is not an ideal independent family.
    \item For all $\alpha$, $\mathscr{I}_{\alpha+1}=\mathscr{I}_\alpha$ or $\mathscr{I}_{\alpha+1}=\mathscr{I}_\alpha\cup\{A_0^\alpha,A_1^\alpha\}$ for some $A_0^\alpha,A_1^\alpha\in\mathscr{U}^*$ and such that $A_0^\alpha\setminus A_1^\alpha,A_1^\alpha\setminus A_0^\alpha\subseteq Y_\alpha$.
    \item If $\alpha$ is a limit ordinal, then $\mathscr{I}_\alpha=\bigcup_{\beta<\alpha}\mathscr{I}_\beta$.
    \item If $A_i^\alpha\in\mathscr{I}$ is added in step $\alpha$ of the iteration, $\langle \mathcal{P}^{\alpha,i}_\beta:\beta\in[\alpha+1,\omega_1)\rangle$ is the enumeration of all partitions of $A^\alpha_i$, for $i\in 2$, and for all $\alpha$ and $\beta>\alpha$, there are a finite $F\subseteq[\beta\setminus\{\alpha\}]^{<\omega}$ and $k\in\omega$, such that for any $i,j\in 2$, $A^\alpha_i\setminus\left(A_j^\beta\cup \bigcup F\right)\setminus k$, either, is a partial selector of partition $\mathcal{P}^{\alpha,i}_\beta$, or is contained in one element of the partition $\mathcal{P}^{\alpha,i}_\beta$.
\end{enumerate}
After the recursion we define $\mathscr{I}=\bigcup_{\alpha<\omega_1}\mathscr{I}_\alpha$. Condition (1) makes sure that $\mathscr{I}$ is an ideal independent family and (2) makes sure that $\mathscr{I}$ is maximal. Condition (3) makes sure that $\mathscr{I}$ is $\mathscr{U}$-encompassing. Condition (5) makes sure that the filters $\mathcal{F}(\mathscr{I},A)$ are selective ultrafilters for all $A\in\mathscr{I}$. We start by setting $\mathscr{I}_\omega=\langle A_n:n\in\omega\rangle$ be a partition of $\omega$ into infinitely many infinite sets, $A_0^n=A_1^n=A_n$ and $\langle \mathcal{P}_\beta^{n,i}:\beta\in[\omega,\omega_1)\rangle$ the enumeration of all partitions of $A_n$. Assume $\mathscr{I}_\alpha$ has been constructed. We take care of the set $X_\alpha$. If $X_\alpha\in\mathscr{U}$, we just define $\mathscr{I}_{\alpha+1}=\mathscr{I}_\alpha$, and condition (2) from Definition \ref{encompassing} will make sure that $X_\alpha$ can not be added to the family $\mathscr{I}$. If $X_\alpha$ is in the ideal generated by the family $\mathscr{I}_\alpha$ we have nothing to do and we can define $\mathscr{I}_{\alpha+1}=\mathscr{I}_\alpha$ again. Otherwise, $X_\alpha\notin\mathscr{U}$ and $X_\alpha$ is positive relative to the ideal generated by $\mathscr{I}_\alpha$. Let $e_\alpha:\omega\to\mathscr{I}_\alpha$ be an enumeration of the elements of $\mathscr{I}_\alpha$, and define $C_0=e_\alpha(0)$, $C_{n+1}=e_\alpha(n+1)\setminus\bigcup_{i\leq n}e_\alpha(i)$. Now, if there are $n\in\omega$ and finite $F\subseteq\mathscr{I}_\alpha\setminus \{e_\alpha(0),\ldots,e_\alpha(n)\}$ such that $C_n\setminus X_\alpha\subseteq^*\bigcup F$ and $C_n\cap X_\alpha$ is infinite, then we have $C_n\setminus \bigcup F\subseteq^*C_n\cap X_\alpha\subseteq X_\alpha$, and we can define again $\mathscr{I}_{\alpha+1}=\mathscr{I}_\alpha$. So let us assume that for all $n\in\omega$, $C_n\setminus X_\alpha$ is finite or it is not covered by any $F\subseteq\mathscr{I}_\alpha\setminus \{e_\alpha(0),\dots,e_\alpha(n)\}$. Since $e_\alpha(n)$ is not almost contained in the union of finitely many elements from $\mathscr{I}_\alpha\setminus\{e_\alpha(n)\}$, and $\mathscr{I}_\alpha\setminus\{e_\alpha(n)\}$ is countable, by recursion we can construct an infinite set $B_n\subseteq e_\alpha(n)$, such that for all $Z\in\mathscr{I}_\alpha$ different from $e_\alpha(n)$, we have $Z\cap B_n=^*\emptyset$, and moreover, by going to a subset if necessary, $B_n$ is a partial selector of the partition $\mathcal{P}^{\gamma,i}_\alpha$ or is completely contained in one element of the partition $\mathcal{P}^{\gamma,i}_\alpha$, where $\gamma$ and $i$ are such that $A_i^\gamma=e_\alpha(n)$. Also, since for all $n\in\omega$, $C_n\notin \mathscr{U}$ and $\mathscr{U}$ is a $p$-point,  there is $A\in\mathscr{U}$ such that for all $n\in\omega$, $C_n\cap A$ is finite, $X_\alpha\cap A=\emptyset$ and $A\subseteq Y_\alpha$. Let $W_0,W_1$ be infinite disjoint subsets of $A$ which are not in the ultrafilter $\mathscr{U}$. Define $W$ as,
\begin{equation*}
    W=\left(\bigcup_{n\in\omega} C_n\setminus B_n\right)\setminus A
\end{equation*}
Finally, define $A_i^\alpha=W\cup W_i$, for $i\in 2$, and $\mathscr{I}_{\alpha+1}=\mathscr{I}_\alpha\cup\{A_0^\alpha,A_1^\alpha\}$. Note that $X_\alpha\subseteq A_i^\alpha$, $A_i^\alpha\setminus A_{1-i}^\alpha=W_i\subseteq Y_\alpha$. So we only have to prove that $\mathscr{I}_{\alpha+1}$ is ideal independent, but this is clear from the construction of $A_0^\alpha$ and $A_1^\alpha$.
\end{proof}

\begin{thm}\label{preserving_encompassing}
Let $\mathscr{U}$ be a $p$-point and let $\mathbb P$ be a proper, $\om^\om$-bounding forcing notion which preserves $p$-points. Then $\mathbb P$ preserves the maximality of any $\mathscr{U}$-encompassing maximal ideal independent family $\mathscr{I}$ such that for all $A\in\mathscr{I}$, the corresponding complemented filter $\mathcal{F}(\mathscr{I},A)$ is a $p$-point.
\end{thm}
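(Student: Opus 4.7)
The approach is to argue by contradiction: suppose some condition in $\mathbb P$ forces $\dot X\subseteq\omega$ to be infinite and $\mathscr{I}\cup\{\dot X\}$ to be ideal independent, so in $V^{\mathbb P}$ we have (a) $\dot X$ is not in the ideal generated by $\mathscr{I}$, and (b) for every $A\in\mathscr{I}$ and every finite $F\subseteq\mathscr{I}\setminus\{A\}$, $A\setminus\bigcup F\not\subseteq^*\dot X$. Since $\mathbb P$ preserves $p$-points, $\mathscr U$ still generates a $p$-point ultrafilter in $V^{\mathbb P}$, so either $\dot X$ or $\omega\setminus\dot X$ almost contains some $Y\in\mathscr U\cap V$.

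First I would dispose of the case $Y\subseteq^*\dot X$: by clause (2) of $\mathscr U$-encompassing applied in $V$, co-countably many $A\in\mathscr{I}$ satisfy $Y\in\mathcal F(\mathscr{I},A)$, and for any such $A$ some finite $F\in V$ gives $A\setminus\bigcup F\subseteq^* Y\subseteq^*\dot X$, contradicting (b). Hence one may assume the other alternative, so that $\dot X\subseteq^*\omega\setminus Y$ with $\omega\setminus Y\in V$.

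For each $A\in\mathscr{I}$, preservation of the $p$-point $\mathcal F(\mathscr{I},A)$ as an ultrafilter forces either $\dot X$ or $\omega\setminus\dot X$ to contain some ground-model base element $A\setminus\bigcup F$ of $\mathcal F(\mathscr{I},A)$; the former violates (b), so for every $A$ there is $F_A\in V\cap[\mathscr{I}\setminus\{A\}]^{<\omega}$ with $A\cap\dot X\subseteq^*\bigcup F_A$. Next I would pick a countable $\{C_n\}_{n\in\omega}\subseteq\mathscr{I}$ in $V$ whose union is cofinite in $\omega$ --- available by maximality of $\mathscr{I}$, as in the proof of Theorem \ref{mainthm1} --- to obtain $\dot X\subseteq^*\bigcup_n\bigcup F_{C_n}$. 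By the countable covering property of proper forcing (applied to $\bigcup_n F_{C_n}$ viewed as a countable set of $V$-ordinals under a fixed $V$-enumeration of $\mathscr{I}$), there is a countable $S\subseteq\mathscr{I}$ in $V$ with $\bigcup_n F_{C_n}\subseteq S$, giving $\dot X\subseteq^*\bigcup S$ where $\bigcup S\in V$.

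The main obstacle is to compress this ground-model countable cover into a finite one. The plan is to iterate the maximality analysis on the $V$-set $\bigcup S$: if $\bigcup S$ lies in the ideal generated by $\mathscr{I}$, then so does $\dot X$, contradicting (a); otherwise $\bigcup S\in\mathcal F(\mathscr{I},B)$ for some $B\in\mathscr{I}$, and preservation of $\mathcal F(\mathscr{I},B)$ combined with (b) yields a new witness $F_B\in V$ enabling a strict reduction of the cover. The delicate point will be showing that this reduction must terminate in finitely many steps; here I expect to invoke $\omega^\omega$-bounding on the error function $n\mapsto\max((C_n\cap\dot X)\setminus\bigcup F_{C_n})$, together with the observation that the exceptional set $\{A\in\mathscr{I}:Y\notin\mathcal F(\mathscr{I},A)\}$ is countable by $\mathscr U$-encompassing, which together should enforce termination and produce a finite $G\subseteq\mathscr{I}$ with $\dot X\subseteq^*\bigcup G$ --- contradicting (a) and completing the proof.
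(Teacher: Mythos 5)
Your opening moves are sound and in fact coincide with the paper's: using $\mathscr{U}$-encompassing to rule out $Y\subseteq^*\dot X$, and using preservation of each $p$-point $\mathcal F(\mathscr{I},A)$ to get, for every $A\in\mathscr{I}$, a finite $F_A$ with $A\cap\dot X\subseteq^*\bigcup F_A$. After that, however, there are two genuine gaps. First, the inference $\dot X\subseteq^*\bigcup_n\bigcup F_{C_n}$ does not follow: each inclusion $C_n\cap\dot X\subseteq^*\bigcup F_{C_n}$ carries its own finite error set $E_n$, and $\bigcup_n E_n$ can be infinite; even if $\om^\om$-bounding gives a ground-model $g$ with $\max E_n\le g(n)$, the union is only trapped inside a ground-model set of the form $\bigcup_n (C_n\cap g(n))$, which need not be finite, need not lie in the ideal, and gives you no purchase. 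Second, and decisively, the ``compression'' of a ground-model countable cover $\bigcup S$ into a finite one is exactly the mathematical content of the theorem, and your sketch does not supply an argument: learning that $\bigcup S\in\mathcal F(\mathscr{I},B)$ says $\bigcup S$ is \emph{large} (it almost contains some $B\setminus\bigcup F$), which points the wrong way for shrinking a cover; there is no well-founded measure behind ``strict reduction,'' and neither $\om^\om$-bounding applied to your error function nor the countability of $\{A:Y\notin\mathcal F(\mathscr{I},A)\}$ connects to producing a finite $G$ with $\dot X\subseteq^*\bigcup G$. (A smaller issue: the countable $\{C_n\}$ with cofinite union is invoked in Theorem \ref{mainthm1} for a maximal family of minimal size; for an arbitrary $\mathscr{U}$-encompassing family as in Theorem \ref{preserving_encompassing} you would need to justify it, although it does hold for the families built in Theorem \ref{mainthm3}, which contain a countable partition of $\omega$.)

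The paper's proof avoids trying to cover the generic set $\dot X$ at all. It fixes $q\le p$ and $Z\in\mathscr{U}$ with $q\forces\dot X\cap Z=\emptyset$, and then works with the ground-model \emph{outer hulls} $X_u=\{n: u\nVdash \check n\notin\dot X\}$, to which maximality of $\mathscr{I}$ can be applied in $V$. The key claim is that every such hull lies in \emph{uncountably} many complemented filters; this is proved by taking a countable $M\prec H_\theta$ containing all $A$ with $X_u\in\mathcal F(\mathscr{I},A)$, using the preserved ultrafilters to get dense sets of conditions making $\dot X\cap A_n\setminus\bigcup_{i<k_n,\,i\ne n}A_i$ finite, and then using properness plus $\om^\om$-bounding to find ground-model $f,g$ and an $(M,\mathbb P)$-generic $r\le u$ with $r\forces \dot X\cap A_n\setminus\bigcup_{i<f(n),\,i\ne n}A_i\subseteq g(n)$ for all $n$, whence $X_r$ lands in a complemented filter outside $M$, a contradiction. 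Since $Z\in\mathscr{U}$ lies in co-countably many complemented filters by encompassing, $X_q\cap Z$ is infinite, so some $r\le q$ forces an element of $Z$ into $\dot X$ --- the desired contradiction. Some mechanism of this kind (passing to ground-model approximations of $\dot X$ and a counting argument against countably many filters) is what your plan is missing, and without it the proposal does not constitute a proof.
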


Note that this theorem implies that under $\mathsf{CH}$, in the generic extension by any proper $\om^\om$-bounding $p$-point preserving forcing notion $\mathfrak{s}_{mm}$ is $\aleph_1$.

\begin{proof}
Fix an ultrafilter $\mathscr{U}$, a $\mathscr{U}$-encompassing maximal ideal independent family $\mathscr{I}$ with the property that all of the complemented filters of $\mathscr{I}$ are $p$-points, and a proper, $\om^\om$-bounding, $p$-point preserving forcing notion $\mathbb P$. Let $p \in \mathbb P$ and let $\dot{X}$ be a name so that $p \forces \dot{X} \in [\omega]^\omega$. We need to show that some $q \leq p$ forces that $\dot{X}$ cannot be added to $\mathscr{I}$ without destroying ideal independence. More precisely, this means that we need to either find a $q \leq p$ so that $q$ forces that $\dot{X}$ is in the ideal generated by $\mathscr{I}$ or else find $q \leq p$ and an $A \in \mathscr{I}$ so that $q$ forces that $\dot{X}$ is in the complemented filter corresponding to $A$. 

Thus suppose towards a contradiction that $p$ forces that $\dot{X}$ is neither in the ideal generated by $\mathscr{I}$ nor in any filter $\mathcal{F}(\mathscr{I}, A)$ for any $A \in \mathscr{I}$. Note that this implies in particular that $\dot{X}$ is not in $\mathscr{U}$ since if it were, then in would be in some filter $\mathcal{F}(\mathscr{I}, A)$ (in fact co-countably many). Since $\mathbb P$ preserves $\mathscr{U}$ it follows that $p$ forces that the complement of $\dot{X}$ is in $\mathscr{U}$ and therefore we can find a $q \leq p$ and a $Z \in \mathscr{U}$ so that $q \forces \dot{X} \cap Z = \emptyset$. Fix such a $q$ and $Z$. To complete the proof it suffices to therefore show that some $r \leq q$ forces that $n \in \dot{X}$ for some $n \in Z$. 

For any $u \in \mathbb P$ let $X_u = \{n\in \omega \; | \; u \nVdash \check{n} \notin \dot{X}\}$ be the {\em outer hull} of $\dot{X}$ with respect to $u$, i.e. the collection of $n < \omega$ forced to be in $\dot{X}$ by some $u' \leq u$. Note that $u \forces \dot{X} \subseteq \check{X}_u$ for any $u \in \mathbb P$. It follows that for any condition $u$ stronger than $p$, $X_u$ is not in the ideal generated by $\mathscr{I}$. By the maximality of $\mathscr{I}$, moreover we get that for every $r \leq q$ the set $X_r$ is in some complemented filter of $\mathscr{I}$. Therefore to finish the proof it suffices to show that in fact any such $X_r$ is actually in uncountably many such filters. This suffices since if this is the case then in particular it applies to $X_q$ and, since, by the definition of $\mathscr{U}$-encompassing, $Z$ is in $\mathcal{F}(\mathscr{I}, A)$ for co-countably many $A \in \mathscr{I}$ there is some $A \in \mathscr{I}$ so that $Z \cap X_q \in \mathcal{F}(\mathscr{I}, A)$ and so $Z \cap X_q$ has infinite intersection. Thus, some $r \leq q$ forces that $n \in \dot{X}$ for some (in fact infinitely many) $n \in Z$. Summing up, it suffices to show the following claim.

\begin{claim}
For any $u \in \mathbb P$ stronger than $p$ the set $X_u \in \mathcal{F}(\mathscr{I}, A)$  for uncountably many $A \in \mathscr{I}$.
\end{claim}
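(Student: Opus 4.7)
My plan is to argue the claim by contradiction. Suppose the set $\mathcal{A}_u := \{A \in \mathscr{I} : X_u \in \mathcal{F}(\mathscr{I}, A)\}$ is countable, and enumerate it as $\mathcal{A}_u = \{A_n : n < \omega\}$. For every $A \in \mathscr{I} \setminus \mathcal{A}_u$, the $p$-point (hence ultrafilter) property of $\mathcal{F}(\mathscr{I}, A)$ gives $\omega \setminus X_u \in \mathcal{F}(\mathscr{I}, A)$, yielding a finite $F_A \subseteq \mathscr{I} \setminus \{A\}$ in $V$ with $X_u \cap A \subseteq^* \bigcup F_A$.

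Next I apply the ground-model maximality of $\mathscr{I}$ to the set $X_u \setminus \bigcup_n A_n \in V$. A short argument rules out this set lying in any complemented filter $\mathcal{F}(\mathscr{I}, A^*)$: such membership would give $X_u \in \mathcal{F}(\mathscr{I}, A^*)$ and hence $A^* = A_{n_0}$ for some $n_0 < \omega$, but then the witnessing generator $A_{n_0} \setminus \bigcup F^*$ would be almost disjoint from its alleged almost-superset $X_u \setminus \bigcup_n A_n$, contradicting ideal independence. Therefore $X_u \setminus \bigcup_n A_n$ is either finite or in the ideal, so $X_u \subseteq^* \bigcup_n A_n \cup \bigcup H$ for some finite $H \subseteq \mathscr{I}$.

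Passing to a $V$-generic $G$ with $u \in G$, $p$-point preservation keeps each $\mathcal{F}(\mathscr{I}, A_n)$ an ultrafilter in $V[G]$. Since the standing assumption forces $\dot{X} \notin \mathcal{F}(\mathscr{I}, A_n)$, we obtain $\omega \setminus \dot{X}^G \in \mathcal{F}(\mathscr{I}, A_n)^{V[G]}$, and by ground-model generation of these filters, a finite $F'_n \in V$ with $\dot{X}^G \cap A_n \subseteq^* \bigcup F'_n$. Combined with $\dot{X}^G \subseteq X_u$, this gives $\dot{X}^G \subseteq^* \bigcup H \cup \bigcup_{n < \omega} \bigcup F'_n$, which is a priori only a countable union.

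The main obstacle — and where $\omega^\omega$-bounding is essential — is to consolidate this countable cover into a finite one, which would place $\dot{X}^G$ in the ideal generated by $\mathscr{I}$ in $V[G]$, contradicting the hypothesis that $p$ forces $\dot{X}$ out of the ideal. My plan is to use $\omega^\omega$-bounding to dominate the enumeration of $\dot{X}^G$ by a ground-model function $g$, and then to use the pseudo-intersection property of each $p$-point $\mathcal{F}(\mathscr{I}, A_n)$ to stabilize the sequence $\langle F'_n : n < \omega\rangle$ on a single finite subfamily of $\mathscr{I}$ past some index, thereby reducing the cover to a finite one. The interplay of all three preservation hypotheses — properness (to read off $F'_n$ from a master condition), $\omega^\omega$-bounding (to tame the enumeration of $\dot{X}^G$), and $p$-point preservation (to keep each $\mathcal{F}(\mathscr{I}, A_n)$ ultrafilter and hence to pin down $F'_n$) — is precisely what makes this final step go through.
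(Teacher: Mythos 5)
Your opening reductions are correct (the upward-closure argument ruling out $X_u\setminus\bigcup_n A_n$ from every complemented filter, and hence $X_u\subseteq^*\bigcup_n A_n\cup\bigcup H$, is fine), but the decisive step is only announced, not carried out, and the announced mechanism does not work. First, the sequence $\langle F'_n: n<\omega\rangle$ is only defined in $V[G]$; to do anything with it in the ground model you need it, or at least a bound on it, in $V$. Saying ``properness to read off $F'_n$ from a master condition'' presupposes the countable elementary submodel and dense-set apparatus you never set up, and before that one must arrange that the witnessing finite sets can be drawn from a \emph{fixed countable} subfamily of $\mathscr{I}$ (otherwise no ground-model function can enumerate or dominate them) --- this is exactly what the choice $\{A_i\}=\mathscr{I}\cap M$ accomplishes in the paper. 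Second, and more seriously, even with all the $F'_n$ in $V$ nothing makes them ``stabilize on a single finite subfamily past some index'': the pseudo-intersection property of each individual $p$-point $\mathcal{F}(\mathscr{I},A_n)$ concerns countably many sets inside one filter and says nothing about the covering witnesses attached to different $n$, while $\omega^\omega$-bounding dominates functions and does not convert the countable cover $\bigcup H\cup\bigcup_n\bigcup F'_n$ into a finite one. (Also, the countable cover itself is not quite justified as stated, since the mod-finite errors in $\dot X^G\cap A_n\subseteq^*\bigcup F'_n$ accumulate over $n$.) So your intended contradiction, that some condition forces $\dot X$ into the ideal generated by $\mathscr{I}$, is never reached --- and indeed the paper never establishes any such thing.

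The paper's contradiction is obtained at the level of ground-model outer hulls rather than of $\dot X$ itself. Fix a countable $M\prec H_\theta$ with $\mathbb{P},p,u,\mathscr{I},\mathscr{U}\in M$ and containing every $A$ with $X_u\in\mathcal{F}(\mathscr{I},A)$, and enumerate $\mathscr{I}\cap M=\{A_i: i<\omega\}$. Since each $\mathcal{F}(\mathscr{I},A_n)$ is a $p$-point preserved by $\mathbb{P}$ and $u$ forces $\dot X\notin\mathcal{F}(\mathscr{I},A_n)$, there is in $M$ a dense set of conditions below $u$ forcing $\dot X\cap A_n\setminus\bigcup_{i<k_n,\,i\neq n}A_i$ to be finite; by properness together with $\omega^\omega$-bounding one finds a single $(M,\mathbb{P})$-generic $r\leq u$ and ground-model functions $f,g$ with $r\forces \dot X\cap A_n\setminus\bigcup_{i<f(n),\,i\neq n}A_i\subseteq g(n)$ for all $n$. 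Hence the ground-model set $X_r$ satisfies $X_r\notin\mathcal{F}(\mathscr{I},A_n)$ for every $n$, yet $X_r$ is not in the ideal, so by maximality $X_r\in\mathcal{F}(\mathscr{I},B)$ for some $B$ distinct from all $A_n$; since $X_r\subseteq X_u$ and the filters are upward closed, $X_u\in\mathcal{F}(\mathscr{I},B)$, contradicting the assumption that all such $B$ lie in $M$. This use of a single master condition, ground-model uniformization via $f,g$, and upward closure is the missing idea; without it (or some genuine substitute for your ``stabilization'' step) your argument has a gap at precisely the point where all three hypotheses on $\mathbb{P}$ must be used.
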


Fix such a $u \in \mathbb P$ and suppose towards a contradiction that there were only countably many $A \in \mathscr{I}$ with $X_u \in \mathcal{F}(\mathscr{I}, A)$. Let $M \prec H_\theta$ be a countable model for $\theta$ sufficiently large with $\mathbb{P}, p, u, \mathscr{I}, \mathscr{U} \in M$ containing every $A$ so that $X_u \in \mathcal{F}(\mathscr{I}, A)$. Enumerate $\mathscr{I} \cap M$ as $\{A_i \;  | \; i < \omega\}$. Since $p$, and hence $u$, forces that for each $n < \omega$ the name $\dot{X}$ is not in $\mathcal{F}(\mathscr{I}, A_n)$, and each one of such filters is a $p$-point by our assumption and hence an ultrafilter preserved by $\mathbb P$, there is in $M$ a dense set of conditions below $u$ forcing that $\dot{X} \cap A_n \setminus \bigcup_{i < k_n, i \neq n} A_i$ is finite for some $k_n \in \omega$. Applying $\om^\om$-boundedness and properness we can find in the ground model functions $f, g \in \om^\om$ and a condition $r \leq u$ which is $(M, \mathbb{P})$-generic so that for each $n < \omega$ we have 

$$r \forces \dot{X} \cap A_n \setminus \bigcup_{i < \check{f}(n), i \neq n} A_i \subseteq \check{g}(n)$$

In particular we get that $X_r \cap A_n \setminus \bigcup_{i < f(n), i \neq n} A_i \subseteq g(n)$ and thus $X_r \notin \mathcal{F}(\mathscr{I}, A_n)$ for any $n < \omega$. But then, by applying the same argument to $r$ that we applied to $u$, we get that $X_r$ is in some $\mathcal{F}(\mathscr{I}, B)$ for some $B \in \mathscr{I}$ with $B \neq A_n$ for any $n < \omega$. This is a contradiction however since $X_r \subseteq X_u$ and by definition of the $A_n$'s $X_u \notin \mathcal{F}(\mathscr{I}, B)$.  This contradiction implies that $X_q$ is in uncountably many complemented filters of $\mathscr{I}$ and hence the proof is complete.
\end{proof}

As an straightforward corollary we obtain:
\begin{crl}\hfill
\begin{enumerate}
    \item $\mathfrak{s}_{mm} = \aleph_1$ in the Sacks model.
    \item $\mathfrak{s}_{mm} = \aleph_1$ in the Miller partition model and hence $\mathfrak{s}_{mm} < \mathfrak{a}_T$ is consistent.
    \item $\mathfrak{s}_{mm} = \aleph_1$ in the $h$-perfect tree forcing model and hence $\mathfrak{s}_{mm} < \hbox{non}(\mathcal{N})$ 
is consistent.
    %%%more?
\end{enumerate}
\end{crl}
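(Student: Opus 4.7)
The plan is to invoke Theorems \ref{mainthm3} and \ref{preserving_encompassing} together with standard preservation theorems for countable support iterations. Starting from a model of $\CH$, I would first fix a $p$-point $\mathscr{U}$ and apply Theorem \ref{mainthm3} to produce a $\mathscr{U}$-encompassing maximal ideal independent family $\mathscr{I}$ of cardinality $\aleph_1$ all of whose complemented filters are $p$-points. Then for each of the three items of the corollary, let $\mathbb{Q}$ be the relevant forcing (Sacks forcing, Miller partition forcing, or $h$-perfect tree forcing) and force with its countable support iteration $\mathbb{P}_{\omega_2}$ of length $\omega_2$ over a model of $\mathsf{GCH}$.

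The key preservation fact I would appeal to is that each of the three forcings $\mathbb{Q}$ is proper, $\om^\om$-bounding, and $p$-point preserving, and that these three properties are preserved under countable support iteration by Shelah's classical preservation theorems (as recorded, for instance, in Bartoszy\'nski--Judah). Consequently the whole iteration $\mathbb{P}_{\omega_2}$, viewed as a single forcing, lies in the preservation class of Theorem \ref{preserving_encompassing}. That theorem then applies verbatim and guarantees that $\mathscr{I}$ remains maximal in $V^{\mathbb{P}_{\omega_2}}$; since $|\mathscr{I}| = \aleph_1$ is preserved, this yields $\mathfrak{s}_{mm} = \aleph_1$ in all three models and establishes items (1)--(3).

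For the strict inequalities in (2) and (3), I would then simply cite the known evaluation of the companion cardinals in these models: $\mathfrak{a}_T = \mathfrak{c} = \aleph_2$ in the Miller partition model and $\hbox{non}(\mathcal{N}) = \aleph_2$ in the $h$-perfect tree forcing model. Combined with $\mathfrak{s}_{mm} = \aleph_1$ these give $\mathfrak{s}_{mm} < \mathfrak{a}_T$ and $\mathfrak{s}_{mm} < \hbox{non}(\mathcal{N})$ respectively. The substantive obstacle in this corollary is not really a new one: the combinatorial heavy lifting has already been absorbed into Theorem \ref{preserving_encompassing}, and the remaining labour is the bookkeeping of locating the membership of Miller partition forcing and $h$-perfect tree forcing in the relevant preservation class and checking that their countable support iterates stay there. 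These facts are in the literature devoted to each forcing and require no new arguments.
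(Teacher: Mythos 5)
Your proposal follows essentially the same route as the paper: both apply Theorem \ref{mainthm3} under $\CH$ and then feed the whole (countable support) iteration of Sacks, Miller partition, or $h$-perfect tree forcing into Theorem \ref{preserving_encompassing}, using the standard facts that each of these forcings and their iterations are proper, $\om^\om$-bounding and $p$-point preserving, together with the known evaluations $\mathfrak{a}_T=\aleph_2$ and $\hbox{non}(\mathcal{N})=\aleph_2$ in the respective models. The only difference is that you spell out the iteration preservation theorems explicitly, which the paper treats as standard citations.
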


\begin{proof}
For (1), it is a standard fact that the iterated Sacks forcing preserves $p$-points and it is $\omega^\omega$-bounding. For (2), in \cite{miller_partition_forcing}, Miller has constructed a forcing, known as Miller partition forcing,  which makes the cardinal invariant $\mathfrak{a}_T$ equal to $\aleph_2$, as recently shown in \cite{JCVFOGJS} preserves $p$-points, and as shown in 
\cite{spinas_partition_miller}) is $\omega^\omega$-bounding. For (3) recall, that the $h$-perfect tree forcing is proper, $^\omega\omega$-bounding, preserves $p$-points and that in the $h$-perfect tree forcing model $\hbox{non}(\mathcal{N})=\aleph_2$, see \cite[Section2]{goldstern-judah-shelah}. 
\end{proof}

An alternation of Miller partition and $h$-perfect tree forcings will lead to a model of $\mathfrak{i}=\mathfrak{s}_{mm}<\hbox{non}(\mathcal{N})=\mathfrak{a}_T=\aleph_2$
(for the effect of the respective posets on $\mathfrak{i}$ see \cite{JCVFOGJS} and \cite{CS}).

\begin{crl}
$\mathfrak{s}_{mm}$ is independent of $\mathfrak{a}_T$
\end{crl}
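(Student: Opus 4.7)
The plan is to establish both directions of the independence, namely the consistency of $\mathfrak{s}_{mm} < \mathfrak{a}_T$ and of $\mathfrak{a}_T < \mathfrak{s}_{mm}$. The first direction is already packaged in item (2) of the preceding corollary, or more strongly in the alternation of Miller partition and $h$-perfect tree forcings mentioned in the subsequent remark: in each of those models $\mathfrak{s}_{mm} = \aleph_1$ while $\mathfrak{a}_T = \aleph_2$. I would simply cite that extension.

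For the reverse direction the plan is to exploit the ZFC bound $\mathfrak{u} \leq \mathfrak{s}_{mm}$ established in Theorem \ref{mainthm1}: it suffices to locate a generic extension in which $\mathfrak{a}_T < \mathfrak{u}$, since any such model automatically satisfies $\mathfrak{a}_T < \mathfrak{u} \leq \mathfrak{s}_{mm}$. A natural source is a countable support iteration of Mathias-style forcings of length $\omega_2$ over a ground model of $\CH$, arranged so as to diagonally generate an ultrafilter of character $\aleph_2$ while not adjoining the partition/antichain-type objects that would raise $\mathfrak{a}_T$ above $\aleph_1$; such constructions are in the spirit of, and in some cases already appear in, the line of work on $\mathfrak{a}_T$ cited in the preceding remark.

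The main obstacle is not the present corollary itself---once $\mathfrak{a}_T < \mathfrak{u}$ is in hand, the conclusion is a one-line application of Theorem \ref{mainthm1}---but rather pinning down a concrete reference or iteration witnessing $\mathfrak{a}_T < \mathfrak{u}$. If no such model is directly available off the shelf, one would have to adapt a standard $\mathfrak{a}_T$-preservation argument, analogous in spirit to the tight or selective style of preservation used for mad and maximal independent families, to a forcing that diagonalizes an ultrafilter base; this is technical but by now fairly routine, and once carried out it delivers the second half of the independence and hence the corollary.
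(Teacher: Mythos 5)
Your first direction and your overall strategy for the second direction coincide with the paper's: the Miller partition model gives $\mathfrak{s}_{mm}<\mathfrak{a}_T$, and for the converse one uses Theorem \ref{mainthm1} ($\mathfrak{u}\leq\mathfrak{s}_{mm}$) to reduce everything to producing a model of $\mathfrak{a}_T<\mathfrak{u}$. The gap is precisely at the point you flag yourself: you never actually produce such a model, and the corollary is not proved until one is exhibited. The paper closes this in one line by citing a well-known fact rather than a new construction: in the random real model (adding $\aleph_2$ random reals over a model of $\CH$) one has $\mathfrak{a}_T=\aleph_1$ while $\mathfrak{u}\geq\mathfrak{r}=\mathfrak{c}=\aleph_2$, since each random real splits all ground model infinite sets; hence $\mathfrak{a}_T<\mathfrak{u}\leq\mathfrak{s}_{mm}$ there. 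So the missing ingredient is not a new preservation theorem but an off-the-shelf model, and "the Random model" is the intended answer.

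Moreover, your proposed fallback is not merely unverified but pointed in a problematic direction: a countable support iteration of Mathias-style forcings adds dominating reals, so $\mathfrak{d}=\aleph_2$ in any such extension, and since $\mathfrak{d}\leq\mathfrak{a}_T$ this forces $\mathfrak{a}_T=\aleph_2$ as well, defeating the purpose; in addition, diagonalizing an ultrafilter along the iteration tends to produce an ultrafilter with a small base rather than to guarantee $\mathfrak{u}=\aleph_2$, so keeping $\mathfrak{u}$ large while keeping $\mathfrak{a}_T$ small is exactly the nontrivial content that cannot be dismissed as routine. Replacing that sketch with the citation of the random model (and then the one-line appeal to Theorem \ref{mainthm1}) yields the paper's proof.
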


\begin{proof}
In the Miller partition model, $\mathfrak{s}_{mm} < \mathfrak{a}_T$. On the other hand, it is well known that $\mathfrak{a}_T < \mathfrak{u}$ holds in the Random model and hence $\mathfrak{a}_T < \mathfrak{s}_{mm}$ holds in that model as well.
\end{proof}

\section{Conclusion and Open Questions}

The results of the current paper together with those of \cite{cancino_guzman_miller_2021} give either a $\ZFC$ relation, or establish the independence between $\mathfrak{s}_{mm}$ and any other well studied cardinal characteristic, with the exception of the almost disjointness number $\mathfrak{a}$. The following remains open.

\begin{question}
Is it consistent that $\mathfrak{s}_{mm} < \mathfrak{a}$?
\end{question}

The corresponding question for $\mathfrak{i}$, i.e. the consistency of $\mathfrak{i}<\mathfrak{a}$ is one of the most interesting open problems in cardinal characteristics of the continuum and many of the roadblocks towards solving that problem are the same as trying to answer the question above. See the appendix of \cite{JCVFOGJS} for an interesting discussion on Vaughan's problem. 

As noted in the introduction, Theorem \ref{mainthm3} implies that $\mathfrak{s}_{mm}={\rm max}\{\mathfrak{d}, \mathfrak{u}\}$ in many standard forcing extensions. However, this is not a $\ZFC$ equality as $\mathfrak{s}_{mm}>{\rm max}\{\mathfrak{d}, \mathfrak{u}\}$ holds in the Boolean ultrapower model, see for example \cite{Brendle_templates1}. That model requires a measurable cardinal and increases both $\mathfrak{u}$ and $\mathfrak{d}$. As a result the following two questions remain very interesting:
\begin{question}
Is ${\rm max}\{\mathfrak{d}, \mathfrak{u}\}<\mathfrak{s}_{mm}$ consistent with $\ZFC$?
\end{question}

\begin{question}
If $\mathfrak{d} = \mathfrak{u} = \aleph_1$ does $\mathfrak{s}_{mm} = \aleph_1$?
\end{question}
The later question is an ideal independent version of Roitman's problem. Theorem \ref{mainthm2} opens up the possibility of a maximal ideal independent families of size $\aleph_\omega$. We can therefore ask:

\begin{question}
Is it consistent that $\mathfrak{s}_{mm} = \aleph_\omega$? More generally can $\mathfrak{s}_{mm}$ have countable cofinality?
\end{question}

Finally, we ask more generally about the spectrum of maximal ideal independent families:
\begin{question}
What $\ZFC$ restrictions are there on the set ${\rm spec}(\mathfrak{s}_{mm})$? Can it be equal to any set of regular cardinals which includes the continuum?
\end{question}

\end{document}